\documentclass[a4paper,reqno, 10pt]{amsart}
\usepackage{amsmath,amssymb,amsfonts,amsthm,mathrsfs}
\usepackage{tikz}
\usepackage{mathtools}
\usepackage{verbatim,wasysym,cite}
\usepackage{physics}
\usepackage[latin1]{inputenc}
 \mathtoolsset{showonlyrefs=true}
\usepackage{microtype}
\usepackage{color,enumitem,graphicx}
\usepackage[colorlinks=true,urlcolor=blue, citecolor=red,linkcolor=blue,
linktocpage,pdfpagelabels, bookmarksnumbered, bookmarksopen]{hyperref}
\usepackage[english]{babel}
\renewcommand{\epsilon}{{\varepsilon}}
\usepackage{enumitem}
\numberwithin{equation}{section}
\newtheorem{theorem}{Theorem}[section]
\newtheorem{lemma}[theorem]{Lemma}
\newtheorem{remark}[theorem]{Remark}
\newtheorem{Def}[theorem]{Definition}
\newtheorem{proposition}[theorem]{Proposition}
\newtheorem{corollary}[theorem]{Corollary}

\newcommand{\C}{\mathbb C}

\newcommand{\R}{\mathbb R}

\def\({\left(}
\def\){\right)}
\def\<{\left\langle}
\def\>{\right\rangle}

\usepackage{geometry}
\newtheorem{assumption}{Assumption}

\begin{document}
	\title{A Paley-Wiener type uniqueness result for the electromagnetic Schr\"odinger equation}
\author[Y. Song]{Yilin Song}
 \address{ Yilin Song
\newline Institute of Applied Physics and Computational Mathematics,
 		Beijing, 100088, China.}
\email{songyilin21@gscaep.ac.cn}
\author[Y. Wang]{Ying Wang}
\address{Ying Wang\newline
  BCAM - Basque Center for Applied Mathematics, 48009, Bilbao, Spain}
\email{ywang@bcamath.org}
 \author[J. Zheng]{Jiqiang Zheng}
 \address{Jiqiang Zheng
 		\newline \indent Institute of Applied Physics and Computational Mathematics,
 		Beijing, 100088, China.
 		\newline\indent
 		National Key Laboratory of Computational Physics, Beijing 100088, China}
 \email{zhengjiqiang@gmail.com}	
 \author[R Zhou]{Ruihan Zhou}
 \address{ Ruihan Zhou
\newline Institute of Applied Physics and Computational Mathematics,
 		Beijing, 100088, China.}
\email{19210180085@fudan.edu.cn}
		\maketitle
		
	\noindent \textbf{Abstract:} In this paper, we establish a Paley-Wiener type uncertainty principle for Schr\"odinger equations with bounded  electric and magnetic  potentials,
	\begin{align*}
		i\partial_tu+\Delta_Au+V(t,x)u=0,\,\,u(0,x)=u_0(x),
	\end{align*}
	where $\Delta_A=(\nabla-iA)^2$ denotes the magnetic Schr\"odinger operator.
	Specifically, under suitable assumptions on $A$ and $V$, we show that if a solution 
	$u$
	exhibits   linear exponential decay and support property in one spatial direction at times $t=0$ and $t=1$ respectively, then 
	$u$ must vanish identically.  This result extends the theorem of Kenig-Ponce-Vega [Ann. Sci. \'Ec. Norm. Sup\'er. (4) 47 (2014), 539-557] to the case $A\neq0$. 
   We overcome the difficulty brought by the magnetic potential which breaks the translation invariance in the leading term of
Hamiltonian $H=\Delta_A+V$. As a direct consequence, we also obtain a uniqueness result for a class of semi-linear Schr\"odinger equation with electromagnetic potentials.

	\begin{center}
		\begin{minipage}{145mm}
			{ \small {{\bf Key Words:}  Paley-Wiener uncertainty principle; the electromagnetic Schr\"odinger equation; Carleman estimate; Uniqueness.}
				{}
			}\\
			{ \small {\bf AMS Classification:}
				{35Q55, 35Q41.}
			}
		\end{minipage}
	\end{center}

\section{Introduction}
In this article, we study the uniqueness property of the Cauchy problem
\begin{align}\label{eq1}
	\begin{cases}
		i\partial_tu+\Delta_{A}u+V(t,x)u=0,&(t,x)\in\R\times\R^d,\\
		u(x,0)=u_0(x),
	\end{cases}
\end{align}
where $u:I\times\R^d\to\mathbb C$ is the unknown function and $\Delta_A$ is the  Schr\"odinger operator  with magnetic potentials  $A=A(x):\R^{d}\to\R^d, \,V=V(t,x):\R^{d+1}\to\mathbb C$. More precisely, $\Delta_{A}$ is given by
\begin{align*}
	\Delta_A=(\nabla-iA)^2=\Delta-iA\cdot \nabla-i\nabla\cdot A-|A|^2.
\end{align*}The uniqueness problem  is tightly   connected with the uncertainty principle, which asserts that a nontrivial function cannot be simultaneously localized in both physical and frequency spaces. 
In 1933, Hardy \cite{Hardy} first proved the following fundamental uncertainty principle.
\begin{theorem}[Hardy's uncertainty principle]If $f(x)=\mathcal{O}(e^{-\alpha|x|^2})$ and its Fourier transform has Gaussian decay $\widehat{f}(\xi)=\mathcal{O}(e^{-\beta|\xi|^2})$ with $\alpha\beta>\frac{1}{16}$, then $f=0$.
	
\end{theorem}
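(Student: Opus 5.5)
We prove Hardy's uncertainty principle by the classical complex-analytic route: a Phragmén--Lindelöf argument applied to the Fourier transform. We use the normalization $\widehat f(\xi)=\int f(x)e^{-2\pi i x\cdot\xi}\,dx$ for concreteness and rescale at the end.

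\emph{Reduction.} First reduce to dimension one. Slicing in $x'$ and integrating in all but one variable preserves both Gaussian bounds, since partial Fourier transforms in the remaining variables commute with integration. Next, dilate $f$ so that $\alpha=\beta$; the product $\alpha\beta$ is dilation invariant. Finally, because the hypothesis is a strict inequality, we may lower $\alpha,\beta$ slightly and assume the critical case, $f(x)=\mathcal O(e^{-\pi x^2})$ and $\widehat f(\xi)=\mathcal O(e^{-\pi\xi^2})$. In this normalization the critical threshold is $\alpha\beta=\pi^2$, which corresponds to $\tfrac1{16}$ in the stated normalization. Strictness then gives a little extra room, namely $|\widehat f(\xi)|\le Ce^{-(\pi+\delta)\xi^2}$.

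\emph{Entire extension and growth bound.} The Gaussian decay of $f$ lets $\widehat f$ extend to an entire function $F(z)=\int f(x)e^{-2\pi i xz}\,dx$. Completing the square gives
\[
|F(\xi+i\eta)|\le C\int e^{-\pi x^2+2\pi x\eta}\,dx\le C'e^{\pi\eta^2}.
\]
So $F$ has order at most $2$, and its type is controlled by $e^{\pi|z|^2}$. On the real axis we have the much better bound $|F(\xi)|\le Ce^{-(\pi+\delta)\xi^2}$.

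\emph{Phragmén--Lindelöf step.} Consider $G(z)=F(z)e^{\pi z^2}$. On $\mathbb R$ it is bounded, and in fact $|G(\xi)|\le Ce^{-\delta\xi^2}$. On the imaginary axis, $|G(i\eta)|\le C'e^{\pi\eta^2-\pi\eta^2}=C'$. In each quadrant $G$ has order $2$, and the quadrant has opening angle $\pi/2$, which is exactly the borderline case for Phragmén--Lindelöf. This is the main obstacle. The standard fix is to work instead in sectors of angle slightly less than $\pi/2$ bounded by the real axis and a ray $\arg z=\theta$. The function $e^{i\epsilon z^2}$-type multipliers then handle the order-$2$ growth: one checks $|G(re^{i\theta})|\le Ce^{\pi r^2|\cos2\theta+1|/\ldots}$ and absorbs this with $\exp(i\epsilon e^{-i\gamma}z^2)$ for suitable $\gamma$, then lets $\epsilon\to0$.

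\emph{Conclusion.} The outcome is that $G$ is bounded in $\mathbb C$, so by Liouville's theorem $G$ is constant, i.e.\ $F(z)=ce^{-\pi z^2}$. The real-axis decay $e^{-(\pi+\delta)\xi^2}$ then forces $c=0$. Hence $\widehat f\equiv0$ and $f=0$. Rescaling back to the stated normalization turns the critical case $\alpha\beta=\pi^2$ here into $\alpha\beta=\tfrac1{16}$, so the strict hypothesis $\alpha\beta>\tfrac1{16}$ is exactly what makes the argument work.
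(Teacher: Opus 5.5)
The paper does not prove this statement. Hardy's theorem is only quoted as background, with a reference to Hardy's original paper, so there is no argument in the paper to compare yours with. Your route is the classical proof and is sound in outline: holomorphic extension of $\widehat f$, the auxiliary function $G(z)=F(z)e^{\pi z^2}$, Phragm\'en--Lindel\"of in the quadrants, then Liouville.

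The one step you assert rather than prove is the borderline Phragm\'en--Lindel\"of argument. That step is the real content of the theorem, so you should write it out. What makes it work is the \emph{sharp} bound $|G(\xi+i\eta)|\le Ce^{\pi\xi^2}$. The weaker bound $|G(z)|\le Ce^{\pi|z|^2}$ plus boundedness on the axes is not enough: $e^{-iz^2}$ has modulus $1$ on both axes and is unbounded on the diagonal. On the ray $\arg z=\tfrac\pi2-\epsilon$ the sharp bound gives $|G|\le Ce^{\pi r^2\sin^2\epsilon}$, whose exponent is quadratically small in $\epsilon$.

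So on $S_\epsilon=\{0\le\arg z\le\tfrac\pi2-\epsilon\}$ set $H_\epsilon(z)=G(z)\exp\bigl(i\pi\sin\epsilon\,e^{i\epsilon}z^2\bigr)$. For $z=re^{i\theta}$,
\[
\bigl|\exp\bigl(i\pi\sin\epsilon\,e^{i\epsilon}z^2\bigr)\bigr|=\exp\bigl(-\pi r^2\sin\epsilon\,\sin(2\theta+\epsilon)\bigr),
\]
which is at most $1$ on $S_\epsilon$ and equals $e^{-\pi r^2\sin^2\epsilon}$ on both boundary rays. Hence $|H_\epsilon|\le C$ on $\partial S_\epsilon$, and $|H_\epsilon(z)|\le Ce^{\pi|z|^2}$ in $S_\epsilon$. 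Since $S_\epsilon$ has opening less than $\tfrac\pi2$, the ordinary Phragm\'en--Lindel\"of theorem applies and gives $|H_\epsilon|\le C$ on $S_\epsilon$, that is, $|G(z)|\le Ce^{\pi\sin\epsilon\,|z|^2}$ there. Fixing $z$ in the open first quadrant and letting $\epsilon\to0$ gives $|G(z)|\le C$. The other quadrants follow because $G(-z)$ and $\overline{G(\bar z)}$ satisfy the same bounds. This is your multiplier $\exp(i\epsilon' e^{-i\gamma}z^2)$ with $\gamma=-\epsilon$ and $\epsilon'=\pi\sin\epsilon$. The coupling between the coefficient and the angular gap is exactly what your sketch leaves unspecified.

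Two smaller points. First, in the reduction to $d=1$, integrating out $x'$ only yields $\widehat f(\xi_1,0)=0$, and slicing at fixed $x'$ gives no control of the one-dimensional transform. Instead, for each fixed $\eta'\in\R^{d-1}$ use $g_{\eta'}(x_1)=\int f(x_1,x')e^{-2\pi i x'\cdot\eta'}\,dx'$. It satisfies $|g_{\eta'}(x_1)|\le Ce^{-\alpha x_1^2}$ and $\widehat{g_{\eta'}}(\xi_1)=\widehat f(\xi_1,\eta')=\mathcal O(e^{-\beta\xi_1^2})$; then let $\eta'$ vary. Second, $\tfrac1{16}$ is the sharp threshold only for a normalization such as $\widehat f(\xi)=\int f(x)e^{-ix\cdot\xi/2}\,dx$. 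For $\int f(x)e^{-ix\cdot\xi}\,dx$ the threshold is $\tfrac14$, and Gaussians show the statement with $\tfrac1{16}$ would then be false. So state explicitly which normalization your final rescaling refers to.
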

As a toy model, the free Schr\"odinger equation can be represented as
\begin{align*}
	u(t,x):= e^{it\Delta }f=\frac{1}{(2\pi t)^\frac d2}\int_{\R^d}e^{i\frac{|x-y|^2}{4t}}f(y)\,\dd y=(2\pi t)^{-\frac d2}e^{i\frac{|x|^2}{4t}}\mathcal{F}\Big(e^{i\frac{|\cdot|^2}{4t}}f\Big)\Big(\frac{x}{2t}\Big).
\end{align*}
Then, by applying the Hardy theorem, we can show that if $u(x,0)=\mathcal O(e^{-\alpha|x|^2}) $ and $u(x,T)=\mathcal O(e^{-\beta|x|^2})$ with  $\alpha\beta>\frac{1}{16T}$, then $u\equiv0$.

For dispersive equation, Zhang \cite{Zhang} established  the unique continuation for 1-D cubic NLS $\partial_{t}u=i(\partial_{x}^2u\pm|u|^{2}u)$  by means of  the inverse scattering method. He showed that  if $u(x,t)=0$ for $(x,t)\in(-\infty,a]\times\{0,1\}$, then $u\equiv0$. Subsequently, Bourgain \cite{Bourgain-IMRN}proved that if a solution to the semilinear Schr\"odinger equation $\partial_{t}u=i(\Delta u\pm|u|^{p-1}u)$ has compact support in a  nontrivial time interval, then $u\equiv0$.   Later, in a series of works by Escauriaza-Kenig-Ponce-Vega \cite{EKPV-CPDE,EKPV-2008MRL,EKPV-JEMS,EKPV-Duke}, they finally proved the following uniqueness property for linear Schr\"odinger equation with bounded electric potential
\begin{theorem}[Unique continuation property,\cite{EKPV-CPDE,EKPV-2008MRL,EKPV-JEMS,EKPV-Duke}]\label{Th-EKPV}
	Let $d\geq1$ and $T>0$ bounded. Assume that $u\in L^\infty([0,T],L^2(\R^d))\cap L^2([0,T],H^1(\R^d))$ is the solution to 
	\begin{align}\label{eq:NLS}
		i\partial_tu+\Delta u-V(t,x)u=0.
	\end{align}
	Under the certain boundedness assumption on $V$ and suppose that 
	$$
	\big\|e^{\alpha|x|^2}u(x,0)\big\|_{L^2(\R^d)}+\big\|e^{\beta|x|^2}u(x,T)\big\|_{L^2(\R^d)}<\infty,
	$$
	then we have $u\equiv0$ if $\alpha\beta>\frac{1}{16T}$.
\end{theorem}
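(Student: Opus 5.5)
The plan follows the Escauriaza--Kenig--Ponce--Vega scheme in its three-step form. We are after a Hardy-type statement, so we rescale time first. Setting $\tilde u(x,t)=u(\sqrt{T}x,Tt)$ reduces to $T=1$, with the hypothesis $\alpha\beta>\frac1{16}$ preserved. We also use the Appell (pseudo-conformal) transform, which carries a solution with Gaussian weights $e^{\alpha|x|^2}$ at $t=0$ and $e^{\beta|x|^2}$ at $t=1$ to a solution of an equation of the same form with a new bounded potential $\tilde V$. Choosing the parameters of the transform appropriately, we may assume both endpoint weights equal $e^{\mu|x|^2}$ with $\mu=\sqrt{\alpha\beta}>\frac14$. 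Throughout, we assume $V$ bounded and either $\lim_{R\to\infty}\|V\|_{L^1_tL^\infty(|x|>R)}=0$ or $V$ time-independent, which is the "certain boundedness assumption".

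\textbf{Step 1 (persistence of Gaussian decay).} We show that $\sup_{t\in[0,1]}\|e^{\mu|x|^2}u(t)\|_{L^2}<\infty$. We also show that $\sqrt{t(1-t)}\,\nabla u$ lies in the weighted space $L^2(e^{2\mu|x|^2}dx\,dt)$. This follows from a log-convexity argument applied to $f=e^{\phi}u$ with $\phi=\mu|x|^2$, where $\phi$ is first truncated or mollified (e.g.\ $\mu|x|^2/(1+\epsilon|x|^2)$) to justify the computations. Writing $\partial_t f=\mathcal S f+\mathcal A f+$ (potential terms), with $\mathcal S$ symmetric and $\mathcal A$ antisymmetric, one computes
\[
\partial_t^2 \log H(t)\ \ge\ -C\bigl(\|V\|_\infty\bigr),\qquad H(t)=\|f(t)\|_{L^2}^2.
\]
This holds because the commutator $[\mathcal S,\mathcal A]$ is nonnegative for the convex weight. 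The inequality gives $H(t)\le C\,H(0)^{1-t}H(1)^t$, uniformly in $\epsilon$. The same identity, integrated against $t(1-t)$, yields the gradient bound.

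\textbf{Step 2 (improved log-convexity).} Using the Step 1 bounds, we rerun the convexity argument with a time-dependent Gaussian weight $e^{a(t)|x|^2}$, where $a$ solves a Riccati-type ODE. This shows that $\|e^{a(t)|x|^2}u(t)\|_{L^2}$ stays bounded with $a(1/2)$ strictly larger than $\mu$. When $\mu>\frac14$, the ODE allows one to bootstrap $a(t)$ to a value as large as we like on a middle interval, say $[\frac14,\frac34]$. Concretely, we get $\|e^{\gamma|x|^2}u\|_{L^2([\frac14,\frac34]\times\R^d)}<\infty$ with $\gamma$ large.

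\textbf{Step 3 (Carleman estimate and conclusion).} We prove a Carleman inequality of the form
\[
R\,\|e^{|x+Rt(1-t)e_1|^2/8}g\|_{L^2}\le C\,\|e^{|x+Rt(1-t)e_1|^2/8}(i\partial_t+\Delta)g\|_{L^2}
\]
for $g\in C_0^\infty(\R^{d+1})$ supported in $\{|x/R+\varphi(t)e_1|\ge1\}$. We apply it to $g=\theta_M(x)\eta_R(t,x)u$, where $\theta_M,\eta_R$ are cutoffs, and absorb the $V$ term for $R$ large. The cutoff errors are controlled by the Step 1 and Step 2 weighted bounds. Comparing the lower bound $\|u\|_{L^2(B_1\times[\frac{1}{2}-\frac18,\frac12+\frac18])}e^{cR^2}$ with the error bound $e^{c'R^2}$, where $c>c'$, forces $u\equiv0$ on a set of positive measure, hence everywhere by Step 1 and backward uniqueness.

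The main obstacle will be Step 3. The constants must be balanced so that the Carleman gain $e^{cR^2}$ beats the errors, and that balance is exactly where the condition $\mu>\frac14$ (equivalently $\alpha\beta>\frac1{16}$) enters through Step 2. I would first try to make the Riccati bootstrap in Step 2 quantitative enough that the comparison in Step 3 becomes routine.
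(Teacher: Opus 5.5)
The paper gives no proof of this statement; it is quoted from Escauriaza--Kenig--Ponce--Vega, so the comparison is with their argument. Your outline reproduces its architecture: Appell reduction to equal weights $\mu=\sqrt{\alpha\beta}>\frac14$, log-convexity, improved intermediate decay, then a Carleman estimate. But it skips exactly the step that makes the constant sharp, and your account of where $\mu>\frac14$ enters is not correct.

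\textbf{The gap is Step 2.} The convexity computation for $f=e^{a(t)|x|^2}u$ is only formal unless you already know $e^{a(t)|x|^2}u(t)\in L^2$. Step 1 gives this only where $a(t)\le\mu$. Truncating the weight does not help: where $a(t)\phi(x)$ stops being uniformly convex, the terms $\ddot a\,\phi|f|^2$ (with $\ddot a<0$ near $t=\frac12$) and $\dot a\operatorname{Im}\int\nabla\phi\cdot\nabla f\,\bar f$ are no longer controlled.

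This is not a technicality. The natural Riccati-type equation, $\ddot a-\frac{3\dot a^2}{2a}+32a^3=0$, is solved by $a_c(t)=\frac{c}{1+16c^2(t-\frac12)^2}$. These are the decay rates of the explicit solutions $u_c=(1+4ic(t-\frac12))^{-d/2}e^{-c|x|^2/(1+4ic(t-\frac12))}$ of $i\partial_tu+\Delta u=0$.
\begin{itemize}
\item Since $a_c(0)=a_c(1)=\frac{c}{1+4c^2}\to0$, these weights are admissible at the endpoints for \emph{every} $\mu>0$.
\item The convexity constants depend only on $\|V\|_{L^\infty}$.
\item So the formal argument would give $\|e^{c|x|^2}u(\frac12)\|\le N$ for all large $c$, hence $u(\frac12)=0$, for every $\mu>0$.
\item The solution $u_{1/2}$ refutes this for any $\mu<\frac14$.
\end{itemize}
So the threshold cannot come from the ODE. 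It must enter through a rigorous proof of the improved weighted bounds. That is the genuinely new part of the sharp argument, and your proposal offers no mechanism for it.

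As a result, the weight of the proof is misplaced. The profiles do not give large decay on a fixed interval, since $a_c(\frac14)=\frac{c}{1+c^2}\le\frac12$ for all $c$. Moreover, if Step 2 held as you state it, Step 3 would be superfluous. On some $[t_1,t_2]\subset[\frac14,\frac34]$ of length at least $\frac14$, the rescaled Gaussian rates would be arbitrarily large, and the earlier non-sharp Hardy theorem would already give $u\equiv0$. The main obstacle is Step 2, not Step 3.

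\textbf{Smaller points.}
\begin{itemize}
\item In Step 1, the sample truncation $\mu|x|^2/(1+\epsilon|x|^2)$ is not convex, so the commutator positivity you invoke is not available uniformly in $\epsilon$. The standard remedies are a parabolic regularization, or a priori bounds $\sup_t\|e^{\lambda\cdot x}u(t)\|<\infty$ obtained by splitting $V$ as in Step 1 of the proof of Theorem \ref{thm-P}. The latter allow convex, linearly growing truncations. This is where the decay of $V$ at infinity is needed; you state that hypothesis but never use it.
\item The Carleman weight in Step 3 needs the extra factor $e^{-(1+\epsilon)R^2t(1-t)/2}$. Without it, the commutator has a negative part of order $R^2$, even inside the stated support region.
\item Your time rescaling preserves the hypothesis only for the scale-invariant threshold $\alpha\beta>\frac1{16T^2}$. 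The $\frac1{16T}$ in the statement is a misprint, and for $T<1$ Gaussian solutions contradict it.
\end{itemize}
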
For the Schr\"odinger equation with magnetic potential,  Fanelli-Vega \cite{Fanelli-Vega} established the magnetic virial identities, which were later used in Barc\'elo-Fanelli-Guiterrez-Ruiz-Vilela \cite{BFGRV-JFA} and Cassano-Fanelli \cite{Cassano-Fanelli} to prove a unique continuation result under $\alpha\beta>\frac{1}{16T}$, together with certain transversal conditions on magnetic field in dimensions $d\geq3$.

However, there exist many different uncertainty principles beyond the classical one proved by Hardy. Escauriaza-Kenig-Ponce-Vega \cite{EKPV-JLMS} prove the Morgan type uncertainty principle for Schr\"odinger equation \eqref{eq:NLS}. Moreover, they showed that if solution admits the exponential decay
\[
\int_{\R^d}|u(x,0)|^2e^{\alpha^p|x|^p/p}\,\dd x+\int_{\R^d}|u(x,1)|^2e^{\beta^q|x|^q/q}\,\dd x<\infty
\]
with $p\in(1,2]$ and $\frac{1}{p}+\frac{1}{q}=1$, then
$u\equiv0$ under the condition $\alpha\beta>N_p$ for some constant $N_p>0$. This result was recently extended to the  Schr\"odinger equation with magnetic potentials by Huang-Wang\cite{Huang-Wang2025}.   
In addition, Kenig-Ponce-Vega \cite{KPV-14}  established the following Paley-Wiener type uncertainty principle, which can be also interpreted as the limiting case ($p=1$) of the Morgan type uncertainty principle.

\begin{theorem}[\cite{KPV-14}]\label{thm-KPV}
Suppose that $u\in C([0,1],L^{2}(\mathbb{R}^{d}))$ is a solution of \eqref{eq:NLS}. Assume that
	\begin{align*}
		\sup_{t\in[0,1]}\int_{\mathbb{R}^{d}}|u(x,t)|^{2}\,\dd x\leq& K_{1}<\infty,\\
		\int_{\mathbb{R}^{d}}e^{2a_{1}|x_{1}|}|u(x,0)|^{2}\,\dd x=&K_{2}<\infty, \textit{ for some }\,\,a_{1}>0,\\
		\operatorname{supp}u(\cdot,1)\subset\{x\in\mathbb{R}^{d}:x_{1}\leq& a_{2}\}, \textit{ for some }\,\, a_{2}<\infty.
	\end{align*}
We also assume that $V\in L^{\infty}(\mathbb{R}^{d}\times[0,1])$ and 
$\lim\limits_{\rho\to\infty}\|V\|_{L^{1}([0,1],L^{\infty}(\mathbb{R}^{d}\backslash B_{\rho}))}=0,$
where $B_\rho$ denotes a ball centered at origin with radius $\rho$. Then $u\equiv0$.
\end{theorem}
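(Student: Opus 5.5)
The proof follows the Kenig--Ponce--Vega strategy: propagate the one-sided decay in $x_1$ from the two endpoint times into the interior of $[0,1]$, then contradict it with a lower bound coming from a Carleman estimate.

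\emph{Step 1: upper bounds.} I would first show that the weighted norms survive into the interior. For every $\lambda>0$ one should have
\[
\sup_{t\in[0,1]}\int e^{2\lambda t x_1^+}\,e^{2a_1(1-t)|x_1|}|u(x,t)|^2\,\dd x<\infty .
\]
At $t=1$ the support condition makes $e^{\lambda x_1}u(\cdot,1)$ bounded in $L^2$ for \emph{every} $\lambda$, since $x_1\le a_2$ there. At $t=0$ the decay hypothesis gives $e^{a_1|x_1|}u(\cdot,0)\in L^2$.

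The tool is the logarithmic convexity/energy estimate for $f=e^{\phi}u$ with a linear weight $\phi=\mu x_1$. This weight is harmless because $\partial_{x_1}\phi$ is constant. Conjugating $i\partial_t+\Delta$ by $e^{\mu x_1}$ produces a symmetric part $\Delta+\mu^2$ and an antisymmetric part $-2\mu\partial_{x_1}$. Their commutator vanishes, so $\|e^{\mu x_1}u(t)\|_{L^2}$ is log-convex in $t$, up to the bounded perturbation $V$. Interpolating between $t=0$ and $t=1$ yields the bound. In particular, for each fixed $t\in(0,1)$, $u(t)$ has arbitrarily large exponential decay as $x_1\to+\infty$.

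\emph{Step 2: lower bound.} Assume $u\not\equiv0$. Using a Carleman inequality with a weight of the form $e^{\alpha|x/R+\varphi(t)e_1|^2}$, with $\varphi$ supported in a compact subinterval of $(0,1)$, one gets a lower bound
\[
\|u\|_{L^2(\{R-1\le|x|\le R\}\times[1/4,3/4])}\ \gtrsim\ e^{-cR^2}
\]
for large $R$. The hypothesis $\|V\|_{L^1L^\infty(|x|>\rho)}\to0$ is used here to absorb $V$ on the region where the cutoffs act.

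\emph{Step 3: contradiction.} To play Step 1 against Step 2, the lower bound has to be made to live in the direction $x_1\to+\infty$; this is the core of the argument. I would shift the Carleman weight along $e_1$, i.e. use a weight centred at $Re_1$. That localizes the lower bound to the region $x_1\sim R$, giving a mass there of at least $e^{-cR^{2}}$. Step 1 instead gives an upper bound of $e^{-\lambda R/4}$ on that region, for every $\lambda$.

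Linear decay alone cannot beat Gaussian lower bounds, so I would refine the argument as KPV do. Iterate Step 1 on subintervals: the support property at $t=1$ together with linear decay gives arbitrarily large linear decay in the interior, and this is upgraded to the statement that for $x_1$ large, $u$ is smaller than $e^{-\lambda x_1}$ for all $\lambda$, with constants controlled. One then runs the Carleman estimate with the linear-in-$x_1$ weight $e^{\lambda(x_1+\varphi(t))}$ on a strip, choosing $\lambda\sim R$. This converts the lower bound into $e^{-cR\log R}$-type behaviour, which the arbitrary linear decay defeats, and letting $R\to\infty$ gives $u\equiv0$.

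The main obstacle is this last matching of the rates. It requires a Carleman estimate in the variable $x_1$ only, with the remaining variables cut off, and careful control of the errors produced by the cutoff functions. Those errors must be handled using only the $L^2$ bound $K_1$ in $x'$.
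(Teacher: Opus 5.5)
Your proposal has a genuine gap at its central step (the matching of upper and lower bounds), and Step 1 also claims more than your tool can give.

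\textbf{Step 1.} Log-convexity of $\|e^{\mu x_1}u(t)\|_{L^2}$ for a \emph{fixed} $\mu$ requires $e^{\mu x_1}u(0)\in L^2$, i.e.\ $\mu\le a_1$. So it only yields $\sup_t\|e^{a_1x_1}u(t)\|_{L^2}<\infty$. This is exactly the paper's Step 1, obtained from the linear-decay lemma after moving $\chi_{|x|\le\rho}Vu$ into the forcing term. That, not the Carleman step, is where the hypothesis on $V$ at infinity is used. You cannot interpolate the rate $a_1$ at $t=0$ with the rate $\lambda$ at $t=1$ through a weight $\mu(t)x_1$ with $\mu(t)=a_1(1-t)+\lambda t$. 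Then $\mathcal S=-2i\mu\partial_{x_1}+\mu' x_1$ and $\mathcal A=i(\Delta+\mu^2)$, so $\mathcal S_t+[\mathcal S,\mathcal A]=-4i\mu'\partial_{x_1}$, which has no sign. The device that genuinely combines two different endpoint rates is the Appell transform. At time $\tau$ it produces the rate $a_1\lambda/(\lambda(1-\tau)+a_1\tau)$, which stays below $a_1/(1-\tau)$ as $\lambda\to\infty$. Arbitrarily large decay at a fixed interior time is therefore not available.

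\textbf{Step 3.} The repair you sketch cannot work. A lower bound of order $e^{-cR\log R}$ at interior times, derived from the equation and a normalization via cutoffs, is false. The free Gaussian $e^{it\Delta}e^{-|x|^2}$ satisfies everything such an argument uses (the equation, a normalization, linear decay at $t=0$). Yet its mass on $\{R-1<x_1<R\}\times[\frac14,\frac34]$ is $e^{-O(R^2)}$. Consistently, a weight linear in $x_1$ gives no positivity. With $\psi=\lambda(x_1+\varphi(t))$ one gets $\mathcal S_t+[\mathcal S,\mathcal A]=\lambda\varphi''$, which cannot be nonnegative for a nonzero $\varphi$ compactly supported in $(0,1)$. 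Positivity comes only from the Hessian of a quadratic weight, and the resulting Gaussian lower bound is sharp by the example. Likewise, bounds $\le C_\lambda e^{-\lambda R}$ for every $\lambda$ with uncontrolled $C_\lambda$ do not beat $e^{-cR^2}$: with $C_\lambda=e^{\lambda^2/(4c)}$ the best bound is exactly $e^{-cR^2}$.

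\textbf{What is missing.} You need to make the \emph{upper} bound Gaussian in $R$ with a better constant. The paper does this by working at $t=1$ and clearing thin strips.
\begin{itemize}
\item Rescale time to $[1-\delta,1]$ so that $\|V\|_{L^1L^\infty}$ is small. Let $b$ be the mass of $u(\cdot,1)$ in the strip $\{a_2-\frac m2\le x_1\le a_2\}$ at the edge of its support.
\item Apply the Appell transform with $\alpha=\delta^{1/2}a_1$, $\beta=\lambda$. Then $\sup_t\|e^{\gamma x_1}\tilde v(t)\|_{L^2}\lesssim e^{\lambda m\delta^{-1/2}}$ with $\gamma=(\lambda\delta^{1/2}a_1)^{1/2}$. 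Meanwhile $b$ is transported to $2\le x_1\le R/2$, $t\in[\frac38,\frac58]$, with $R\sim\sqrt\lambda$.
\item The Gaussian Carleman estimate in $x_1$ with $\sigma=cR^2$ gives $\Theta(R)\gtrsim bR^2e^{-8cR^2}$. The weighted bound gives $\Theta(R)\lesssim R^3\lambda e^{-\gamma R+2\lambda m\delta^{-1/2}}$.
\item Since $\gamma R\sim R^2\sim\lambda$, every exponent is a multiple of $\lambda$. A large constant in $R\sim\sqrt\lambda$, then small $m$, gives $b\lesssim e^{-c\lambda}$, so $b=0$.
\item Iterating strip by strip yields $u(\cdot,1)\equiv0$, hence $u\equiv0$.
\end{itemize}
The cost $e^{\lambda m}$ of the support hypothesis is why only thin strips can be cleared at a time. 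Your one-shot contradiction at interior times has no analogue of this.
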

\subsection{Main results}
The main goal of this paper is to study Paley-Wiener type uncertainty principle for the covariant Schr\"odinger flow. Our main  theorem generalizes Theorem \ref{thm-KPV} to the Cauchy problem \eqref{eq1}, allowing for a nontrivial magnetic potential, i.e.   $A\neq0$. Before stating the main theorem, we first introduce the assumptions on the magnetic potential $A$ and the associated field $B=\nabla A-(\nabla A)^\top$.
\begin{assumption}\label{assumption on A and B}Let $B$ be the associated magnetic field and suppose that $B$ satisfies the following assumptions:
	\begin{enumerate}
		\item Suppose that $A\in C_{b}^{1}(\mathbb{R}^{d})$. Also, we assume that $B$  is independent of first component, i.e., $\textbf{e}_{1}^{\top}B(x)=0$, with 
		$\|x^\top B\|_{L^{\infty}}:=M_{B}<\infty$.
		\item We denote $\Psi(x):=x^\top B(x)\in\mathbb{R}^{d}$, $\Xi(x):=\int_{0}^{1}\Psi(sx)\,\dd s\in\mathbb{R}^{d}$. There exist positive $\varepsilon^\prime_0,\varepsilon^{\prime\prime}_0$ such that
		\begin{equation}
			\big\||\Psi|^2\big\|_{L_{x}^{\infty}}\leq\varepsilon_0^\prime,\,\,\|\partial_{j}\Psi_j\|_{L_{x}^\infty}\leq\varepsilon_0^{\prime\prime},\,\,j=2,\cdots,d.
		\end{equation}
		% \begin{equation}
			%     \big\||A|^{2}\big\|_{L_t^1L_x^\infty}\leq \varepsilon_0^\prime,\,\,\big\|\partial_j A_j\big\|_{L^1_t L^\infty_x}\leq\varepsilon_0^{\prime\prime},\,\,j=2,\cdots,d.
			% \end{equation}
	\end{enumerate}
	
\end{assumption}
\begin{remark}

Under these assumptions, the operator $\Delta_A$ is self-adjoint on $L^2$. It is worth noting that our assumptions do not require any smallness conditions on the magnetic potential 
$A$ or the electric potential $V$.

\end{remark}
Now, we can state our main result in the following theorem.
\begin{theorem}\label{thm-P}
	Let $d\geq3$ and  $u\in C([0,1],H^{1}(\mathbb{R}^{d}))$ be a solution to \eqref{eq1}.
	Assume   that  the magnetic potential $A$ and magnetic field $B$ satisfy Assumption  \ref{assumption on A and B}.  Suppose further that
	\begin{align}\label{time interval bound}
		\sup_{t\in[0,1]}\int_{\mathbb{R}^{d}}|u(x,t)|^{2}\,\dd x\leq& K_{1}<\infty,\\\label{one directional exponential decay}
		\int_{\mathbb{R}^{d}}e^{2a_{1}|x_{1}|}|u(x,0)|^{2}\,\dd x=&K_{2}<\infty,\textit{ for some }\,\,a_{1}>0,\\\label{one directional support}
		\operatorname{supp}u(\cdot,1)\subset\{x\in\mathbb{R}^{d}:x_{1}\leq& a_{2}\}, \textit{ for some }\,\, a_{2}<\infty.
	\end{align}
	Moreover, assume that the electric potential satisfies
	$\|V\|_{L^{\infty}(\mathbb{R}^{d}\times[0,1])}=M_{0}<\infty$
	and 
	\begin{align}\lim\limits_{\rho\to\infty}\|V\|_{L^{1}([0,1],L^{\infty}(\mathbb{R}^{d}\backslash B_{\rho}))}=0.\label{bound-V}
	\end{align}
	Then, $u\equiv0$.
\end{theorem}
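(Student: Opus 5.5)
We follow the Kenig--Ponce--Vega scheme behind Theorem \ref{thm-KPV}. The two new difficulties are that the magnetic term breaks translation invariance in $x$, and that we cannot simply conjugate by a Galilean/Appell transform. The magnetic issue is handled by a gauge choice adapted to Assumption \ref{assumption on A and B}. Since $\textbf{e}_1^\top B=0$, we pass to the Cronstr\"om (transversal) gauge $\tilde A(x)=-\Xi(x)$, i.e. $\tilde A=A-\nabla\varphi$ with $\varphi(x)=\int_0^1 x\cdot A(sx)\,\dd s$. In this gauge $x\cdot\tilde A=0$ and the first component of $\tilde A$ has the structure inherited from $\Psi$. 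The replacement $u\mapsto e^{-i\varphi}u$ preserves $|u|$, so \eqref{time interval bound}--\eqref{one directional support} are unchanged. Expanding $\Delta_{\tilde A}=\Delta-2i\tilde A\cdot\nabla-i(\nabla\cdot\tilde A)-|\tilde A|^2$, the lower-order pieces $|\tilde A|^2$ and $\nabla\cdot\tilde A$ are controlled by $\varepsilon_0'$ and $\varepsilon_0''$. The genuinely first-order term $\tilde A\cdot\nabla$ is the obstacle.

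\textbf{Step 1: propagation of the exponential decay in $x_1$.} We show that $\sup_{t\in[0,1]}\|e^{a|x_1|}u(t)\|_{L^2}\lesssim K_1+K_2$ for some $0<a\le a_1$, with an extra factor coming from the support condition at $t=1$. We prove this by an energy (log-convexity) argument for $f=e^{\lambda x_1}u$. Computing $\partial_t\|f\|^2$, the symmetric/antisymmetric decomposition of the conjugated operator produces the commutator terms $\lambda\,\partial_1$ and $\lambda\tilde A_1$. The term $\lambda\tilde A_1$ is harmless because $\tilde A$ is bounded ($|\Xi|\le\|\Psi\|_\infty$). We truncate the weight as in EKPV (replacing $x_1$ by a bounded smooth $\phi_R(x_1)$ and letting $R\to\infty$) and use $\|V\|_\infty=M_0$. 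This gives the bound uniformly in $t$ for both signs of $\lambda\le a_1$. The decay at $t=0$ controls one side, and the support at $t=1$ gives arbitrarily large decay on $x_1>a_2$.

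\textbf{Step 2: Carleman estimate.} Take the KPV weight $e^{\mu|x_1/R+\varphi(t)\textbf{e}_1|^2}$, or rather its one-directional version $\exp\big(\mu(x_1/R+\varphi(t))^2\big)$, with $\varphi$ a bump on $[0,1]$. The estimate we want is
\[
\sqrt{\mu}\,\|e^{\mu(\cdot)^2}g\|_{L^2}\le c R\,\|e^{\mu(\cdot)^2}(i\partial_t+\Delta_{\tilde A})g\|_{L^2}
\]
for $g$ compactly supported in $\{|x_1/R+\varphi|\ge1\}$. To prove it, we conjugate and split into symmetric part $S$ and antisymmetric part $\mathcal A$, then compute $[S,\mathcal A]$. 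The magnetic contributions to the commutator are of the form $\tfrac{\mu}{R}\,\partial_1\tilde A\cdot\nabla$ and $B$-terms. Since $B\,\textbf{e}_1=0$, the $\partial_1$-direction commutes with the covariant derivatives: $[\partial_1-i\tilde A_1,\partial_j-i\tilde A_j]=-iB_{1j}=0$. The leading positive term $\tfrac{\mu}{R^2}\|\partial_1 f\|^2$ plus the $\mu^3/R^4$ term is therefore not destroyed. The remaining errors are bounded by $\varepsilon_0',\varepsilon_0''$ times lower powers of $\mu$ and are absorbed for $\mu$ large.

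\textbf{Step 3: conclusion.} Apply Step 2 to $g=\theta_R(x)\eta(x_1/R+\varphi(t))u$ with cutoffs, using \eqref{bound-V} to absorb $Vg$ outside large balls. Step 1 bounds the commutator terms from the cutoffs in the region where $|x_1/R+\varphi|\in[1,2]$ by $e^{4\mu-aR}$-type quantities. The lower bound gives $\|u\|_{L^2(B_1\times[3/8,5/8])}\,e^{c\mu}$. Choosing $\mu=\epsilon R$ with $\epsilon$ small and sending $R\to\infty$ forces $u\equiv0$ on a time slab, hence everywhere by $L^2$ well-posedness.

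The main obstacle is Step 2. Specifically, we must verify that in the chosen gauge all magnetic commutator terms either vanish by $\textbf{e}_1^\top B=0$ or are of strictly lower order in $\mu$, uniformly in $R$. The plan is to handle this by computing $[S,\mathcal A]$ directly in covariant form $\nabla_{\tilde A}$.
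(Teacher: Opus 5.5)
Your Step 3 does not close, and the problem is structural rather than technical. A Carleman inequality with the time-dependent shift $\varphi(t)$ only holds for $\mu\ge cR^2$. The terms $\mu\varphi'^2$ and $\mu(x_1/R+\varphi)\varphi''$ from $\mathcal S_t$, and the cross term $\frac{\mu}{R}\varphi'\operatorname{Im}(D_1f,f)$, must be absorbed by $\frac{\mu^3}{R^4}\|(x_1/R+\varphi)f\|^2$; this is the condition $\sigma\ge C_1R^2$ in Theorem \ref{classical Carleman}. Nor can $\varphi'$ be made small, since $\varphi$ must rise by about $3/2$ within a time interval of length $1/8$. So $\mu=\epsilon R$ is not available. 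With $\mu\sim R^2$, the cutoff errors are of size $e^{8\mu}\int_{x_1\sim R}(|u|^2+|D_1u|^2)$ against a main term of size $e^{2\mu}$, and no fixed-rate bound $e^{-2aR}$ can beat $e^{8cR^2}$.

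A sanity check shows something essential is missing. As written, Step 3 uses only the fixed-rate decay from Step 1. A free Gaussian satisfies that bound for every $a$ at every time, so your argument would force Gaussians to vanish. The support hypothesis \eqref{one directional support} must therefore enter with a decay rate that grows with $R$. Step 1 cannot supply this: the linear-weight estimate can only propagate rate $a_1$, because that is all you have at $t=0$. (Step 1 is also overstated. At $t=1$ there is no decay as $x_1\to-\infty$, so only the one-sided bound $\sup_t\|e^{a_1x_1}u(t)\|$ is available. Even that needs $V$ split into a part small in $L^1_tL^\infty_x$ plus a compactly supported forcing, as in Lemma \ref{Huang}, rather than just $\|V\|_\infty=M_0$.)

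The missing ingredient is the Appell transform, which you dismissed but which is compatible with your gauge. In the Cronstr\"om gauge $x\cdot\tilde A=0$, so the only new term from Lemma \ref{Appell}, $(\alpha-\beta)\tilde A\cdot x/(\alpha(1-t)+\beta t)$, vanishes, and $\mathbf e_1\cdot\tilde A=0$ survives all rescalings. The paper proceeds as follows.

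First, it rescales time to $[1-\delta,1]$ so the potential becomes small. It then applies Appell with $\alpha=\delta^{1/2}a_1$ and $\beta=\lambda$. This turns rate $a_1$ at $t=0$ and support in $\{x_1\le m\delta^{-1/2}\}$ at $t=1$ into the uniform bound $\sup_t\|e^{\gamma x_1}\tilde v(t)\|\lesssim e^{\lambda m\delta^{-1/2}}$, with $\gamma=(\lambda\delta^{1/2}a_1)^{1/2}$ (essentially the geometric mean of the two rates).

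Next, it takes $R\sim Mm\lambda^{1/2}$ and $\sigma=cR^2$. Then all exponents $8cR^2$, $\gamma R$ and $\lambda m\delta^{-1/2}$ are linear in $\lambda$. Choosing $M$ large and then the strip width $m$ small makes the net exponent negative.

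The lower bound comes from the mass of $v(\cdot,1)$ on the strip $\{m/(2\delta^{1/2})<x_1<m\delta^{-1/2}\}$. The slab $t\in[3/8,5/8]$ for $\tilde v$ corresponds to times $\tau\nearrow1$ for $v$. This mass is therefore zero, and one iterates strip by strip.

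Finally, the cutoff errors involve $\tilde D_1\tilde v$ near $x_1\sim R$. The paper controls these with a separate convexity estimate using the weight $e^{\gamma\kappa(x_1)}$; your Step 1 only gives weighted $L^2$ bounds on $u$ itself.
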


As a consequence, we obtain uniqueness for the nonlinear electromagnetic Schr\"odinger equation 
\begin{align}\label{semi}
	i\partial_tu+\Delta_Au+Vu=F(u,\bar{u}),
\end{align}
where $F:\mathbb C^2\to\mathbb C$ and $F\in C^k$ with $k>\frac d2$.  We further assume that $F(0)=\partial_uF(0)=\partial_{\bar{u}}F(0)=0$ and \begin{equation}
	|\nabla F(u,\bar{u})|\leq c(|u|^{p_1-1}+|u|^{p_2-1}),\,\, p_1,p_2>1.
\end{equation}
\begin{theorem}\label{thm-non}
	Let $d\geq3$ and $u\in C([0,1],H^k(\R^d))$ be a solution to \eqref{semi} with $k>\frac{d}{2}$. Suppose that $A$ satisfies Assumption \ref{assumption on A and B} and $V$ satisfies  \eqref{bound-V}. Assume that 
\begin{align}\label{time interval bound-non}
		\sup_{t\in[0,1]}\int_{\mathbb{R}^{d}}|u_1(x,t)-u_2(t,x)|^{2}\,\dd x\leq& \widetilde K_{1}<\infty,\\\label{one directional exponential decay-non}
		\int_{\mathbb{R}^{d}}e^{2\widetilde a_{1}|x_{1}|}|u_1(x,1)-u_2(x,1)|^{2}\,\dd x=&\widetilde K_{2}<\infty,\textit{ for some }\,\,\widetilde a_{1}>0,\\\label{one directional support-non}
		\operatorname{supp}\big(u_1(\cdot,0)-u_2(x,0)\big)\subset\{x\in\mathbb{R}^{d}:x_{1}\leq& \widetilde a_{2}\}, \textit{ for some }\,\, \widetilde a_{2}<\infty.
	\end{align} Then we have $u_1\equiv u_2$.
\end{theorem}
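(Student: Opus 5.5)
The plan is to reduce Theorem \ref{thm-non} to Theorem \ref{thm-P} applied to the difference $w:=u_1-u_2$. Subtracting the two equations \eqref{semi} gives
\begin{align*}
	i\partial_t w+\Delta_A w+Vw=F(u_1,\bar u_1)-F(u_2,\bar u_2).
\end{align*}
I will write the right-hand side as $\widetilde V_1 w+\widetilde V_2\bar w$, where
\begin{align*}
	\widetilde V_1=\int_0^1\partial_uF\big(su_1+(1-s)u_2,\overline{su_1+(1-s)u_2}\big)\,\dd s,
\end{align*}
and $\widetilde V_2$ is defined in the same way with $\partial_{\bar u}F$. On the set where $w\neq0$, the term $\widetilde V_2\bar w$ can be written as $\big(\widetilde V_2\bar w/w\big)w$; on the set where $w=0$, this term vanishes. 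So $w$ solves
\begin{align*}
	i\partial_t w+\Delta_A w+\mathcal V w=0,\qquad \mathcal V:=V-\widetilde V_1-\widetilde V_2\,\frac{\bar w}{w}\,\mathbf 1_{\{w\neq0\}}.
\end{align*}
This is a linear equation of the form \eqref{eq1}, with a complex potential $\mathcal V$. That is admissible, since $V$ in \eqref{eq1} is allowed to be complex-valued. Alternatively, one can rerun the Carleman and log-convexity arguments for a system in $(w,\bar w)$; this costs nothing extra, because the proof only uses $|\mathcal V w|\le(|V|+|\widetilde V_1|+|\widetilde V_2|)|w|$.

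Next I will check the hypotheses of Theorem \ref{thm-P}. Since $k>\frac d2$, Sobolev embedding gives $u_j\in C([0,1],L^\infty)$, so $\sup_t\|u_j\|_{L^\infty}\le C$. The growth bound on $\nabla F$ then gives
\begin{align*}
	|\widetilde V_1|+|\widetilde V_2|\lesssim |u_1|^{p_1-1}+|u_2|^{p_1-1}+|u_1|^{p_2-1}+|u_2|^{p_2-1},
\end{align*}
which is bounded. Hence $\|\mathcal V\|_{L^\infty}<\infty$. I also need the decay condition \eqref{bound-V} for $\mathcal V$. Because $u_j(t)\in H^k$ with $k>\frac d2$, and $t\mapsto u_j(t)$ is continuous into $H^k$, the family $\{u_j(t)\}_{t\in[0,1]}$ is compact in $H^k$. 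It therefore lies in the closure of $C_0$ functions in $L^\infty$, uniformly in $t$, so $\sup_t\|u_j(t)\|_{L^\infty(|x|>\rho)}\to0$ as $\rho\to\infty$. Since $p_i>1$, this gives $\|\widetilde V_1\|_{L^1_tL^\infty(\R^d\setminus B_\rho)}\to0$, and the same for $\widetilde V_2$. Together with \eqref{bound-V} for $V$, condition \eqref{bound-V} holds for $\mathcal V$. The regularity $w\in C([0,1],H^1)$ and the bound \eqref{time interval bound} follow from \eqref{time interval bound-non}.

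The remaining point is the orientation of time. In Theorem \ref{thm-non} the exponential decay is imposed at $t=1$ and the support condition at $t=0$, which is the reverse of Theorem \ref{thm-P}. I will handle this by time reversal: set $\tilde w(t,x):=\overline{w(1-t,x)}$. This function solves $i\partial_t\tilde w+\Delta_{-A}\tilde w+\overline{\mathcal V(1-t)}\,\tilde w=0$. The potential $-A$ satisfies Assumption \ref{assumption on A and B}, since $B\mapsto -B$ and all the conditions there are invariant under this sign change. With this reversal, \eqref{one directional exponential decay-non} becomes \eqref{one directional exponential decay} at time $0$, and \eqref{one directional support-non} becomes \eqref{one directional support} at time $1$. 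Theorem \ref{thm-P} then gives $\tilde w\equiv0$, hence $u_1\equiv u_2$.

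The main obstacle I expect is the uniform-in-time vanishing at infinity of $\mathcal V$, i.e.\ the compactness argument above. The rest of the argument is bookkeeping.
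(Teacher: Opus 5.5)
Your proposal is correct and follows the paper's route. The paper also treats $w=u_1-u_2$ as a solution of the linear equation whose potential is $V$ minus the difference quotient $\frac{F(u_1,\bar u_1)-F(u_2,\bar u_2)}{u_1-u_2}$, which is exactly your $\widetilde V_1+\widetilde V_2\,\bar w/w$. It then checks \eqref{bound-V} for this potential, citing Kenig--Ponce--Vega where you use compactness of $\{u_j(t)\}_{t\in[0,1]}$ in $H^k\hookrightarrow C_0$, and invokes Theorem \ref{thm-P}. Both arguments tacitly assume $V\in L^\infty$, which Theorem \ref{thm-P} needs but Theorem \ref{thm-non} does not state.

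Your time reversal $\tilde w(t,x)=\overline{w(1-t,x)}$ with $A\mapsto -A$ is a step the paper skips. It is needed, because Theorem \ref{thm-non} places the decay at $t=1$ and the support condition at $t=0$, which is the reverse of Theorem \ref{thm-P}.
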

\textbf{Notations and structure of the paper.}\,\,
We express $ X \lesssim Y $ or $ Y \gtrsim X $ to denote that $ X \leq CY $ for some absolute constant $ C > 0 $.  We employ $ O(Y) $ to represent any quantity $ X $ such that $ |X| \lesssim Y $. The notation $ X \sim Y $ implies that $ X \lesssim Y \lesssim X $. The term $ o(1) $ is used to describe a quantity that converges to zero. We also denote  $\langle x\rangle=\sqrt{1+|x|^2}$.

The paper is organized as follows. In Section 2, we present some notations associated to the magnetic potential including gauge transformation and Appell transformation. In Section 3, we prove the Carleman estimate for function $h\in C_0^\infty(\R^{d+1})$ and $h\in L_t^2([0,1],H^1(\R^d))$ separately. Finally, in Section 4, we apply these Carleman estimates to prove the uniqueness result stated in Theorem \ref{thm-P} and \ref{thm-non}.

\section{Preliminaries}
In this section, we collect some basic notations  and introduce the gauge and Appell transformation. 

Let $A:=(A_1(x),\cdots, A_d(x)):\R^{d}\to\R^d$ be the magnetic potential, we define the covariant derivative
\begin{align*}
\nabla_A=\nabla-iA,\,\,D_j=\partial_j-iA_j.
\end{align*}The associated magnetic field is given by
\begin{align}\label{def-B}
B(x):=\nabla A(x)-(\nabla A)^{\top}(x):=\big[\partial_jA_k-\partial_kA_j\big]_{j,k=1}^d.
\end{align}
%It is clearly that $B$ is also the vector field and can be understood as  the anti-symmetric gradient of $A$. 
We also denote the vector field $\Psi(x)=x^{\top}B$ by
\begin{align}\label{def-psi}
\Psi_{k}(x):=\sum_{j=1}^{d}x_jB_{jk}(x),\,\,k=1,\cdots,d.
\end{align}

Next, we introduce the notation of gauge invariant. Let $u$ be the solution to the following covariant flow
\begin{align}
\partial_tu=i(\Delta_A u+V(t,x)u+F(t,x)).\label{2.1}
\end{align}
For the gauge transform $\tilde{A}=A+\nabla \varphi$ with $\varphi:\R^d\to\R$, if  $v=e^{i\varphi}u$ solves
\begin{align*}
\partial_tv=i(\Delta_{\tilde{A}}v+V(t,x)v+e^{i\varphi}F(t,x)),
\end{align*}
we call \eqref{2.1} is invariant under the gauge transform.
Moreover, it holds $\Delta_A(u)=e^{-i\varphi}\Delta_{\tilde A}(e^{i\varphi}u)$. For convenience, we denote $\nabla_{\tilde A}=\nabla-i\tilde{A}$ and its component can be written as $\tilde{D}_j=\partial_j-i\tilde{A}_j$ for $j=1,\cdots,d$.

\begin{Def}
The covariant operator $\nabla-iA$ is called in the Cronstr\"om gauge if $x\cdot A(x)=0$ holds for any $x\in\R^d$.
\end{Def}
Next, we provide a lemma which connects the Cronstr\"om gauge  with the potential term. The detailed proof can be found in \cite{BFGRV-JFA,Iwatsuka}.
\begin{lemma}\label{gauge}
Let $A:\R^d\to\R^d$ be the magnetic potential and  $B\in\mathcal{M}_{d\times d}(\R)$ be the associated magnetic field. %Suppose that the definition of  $B$ and $\Psi$ are the  same as above. 
Assume that the following quantities are bounded for almost every $x\in\R^d$,
\begin{align}\label{bound A}
	\Big|\int_0^1A(sx)\,ds\Big|<\infty,\,\,\Big|\int_0^1\Psi(sx)\,dx\Big|<\infty.
\end{align}
Then for scalar function $\varphi(x)=x\cdot \int_0^1A(sx)\,ds$, we have the following:
\begin{gather*}
	\tilde A(x)=A-\nabla \varphi=-\int_0^1\Psi(sx)\,ds,\\
	x^{\top}\nabla A(x)=-\Psi(x)+\int_0^1\Psi(sx)\,ds.
\end{gather*}
\end{lemma}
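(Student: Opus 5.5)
The plan is to verify both identities by direct differentiation, starting from the definition $\varphi(x)=x\cdot\int_0^1A(sx)\,ds=\sum_k x_k\int_0^1A_k(sx)\,ds$. The hypothesis \eqref{bound A} guarantees that these integrals are finite for a.e. $x$. To differentiate under the integral sign I would use $A\in C^1_b$ from Assumption \ref{assumption on A and B}, or else argue first for smooth $A$ and then approximate. The product rule gives
\begin{align*}
\partial_j\varphi(x)=\int_0^1A_j(sx)\,ds+\sum_k x_k\int_0^1 s\,(\partial_jA_k)(sx)\,ds .
\end{align*}

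The key step is to rewrite $\partial_jA_k$ as $\partial_kA_j+(\partial_jA_k-\partial_kA_j)$, that is, $\partial_kA_j$ plus an entry of $B$ as defined in \eqref{def-B}. The term $\sum_k x_k s(\partial_kA_j)(sx)$ equals $s\frac{d}{ds}\big[A_j(sx)\big]$. Integrating by parts in $s$ gives
\begin{align*}
\int_0^1 s\frac{d}{ds}A_j(sx)\,ds=A_j(x)-\int_0^1A_j(sx)\,ds,
\end{align*}
and the last integral cancels the first term of $\partial_j\varphi$. What remains is $\partial_j\varphi=A_j(x)+\int_0^1 s\sum_k x_kB_{jk}(sx)\,ds$. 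Since $sx_k=(sx)_k$ and $B$ is antisymmetric, the remainder is $\int_0^1\sum_k (sx)_kB_{jk}(sx)\,ds=-\int_0^1\Psi_j(sx)\,ds$, using the convention \eqref{def-psi} $\Psi_j=\sum_k x_kB_{kj}$. Hence $A-\nabla\varphi=\int_0^1\Psi(sx)\,ds$ up to the sign convention. I will track the index convention of $B$ carefully so that the sign matches the stated $-\int_0^1\Psi(sx)\,ds$. The second hypothesis in \eqref{bound A} ensures that this quantity is finite.

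The second identity is pointwise algebra. Compute $(x^\top\nabla A)_k=\sum_j x_j\partial_jA_k=\sum_j x_j\partial_kA_j+\sum_jx_j(\partial_jA_k-\partial_kA_j)$. The second sum is $\pm\Psi_k(x)$. For the first sum, I would combine $\partial_k(x\cdot A)=A_k+\sum_jx_j\partial_kA_j$ with the first identity, differentiated along rays. Alternatively, apply $x\cdot\nabla$ to $\tilde A(x)=-\int_0^1\Psi(sx)\,ds$: for the rescaled function one has $x\cdot\nabla[\Psi(sx)]=s\frac{d}{ds}\Psi(sx)$, and integrating by parts again gives $x\cdot\nabla\tilde A=-\Psi(x)+\int_0^1\Psi(sx)\,ds$. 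Since $\tilde A$ and $A$ differ by a gradient, I relate $x^\top\nabla\tilde A$ to $x^\top\nabla A$ through the same antisymmetrization. Note that $\tilde A$ satisfies the Cronstr\"om condition $x\cdot\tilde A=0$, because $x\cdot\Psi(sx)=\sum x_j x_k B_{jk}=0$. This reading, with $\nabla A$ taken in the gauge $\tilde A$ in which the lemma is used, is the one I would adopt.

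The main obstacle is bookkeeping rather than analysis: keeping the index and transpose conventions of $B$, $\Psi$ and $x^\top\nabla A$ consistent so that the signs come out as stated, and justifying differentiation under the integral when only the a.e. bounds \eqref{bound A} are assumed. For the latter I would smooth $A$ by mollification, prove the identities for smooth $A$, and pass to the limit in $L^1_{loc}$.
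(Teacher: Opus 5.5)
The paper gives no proof of this lemma; it only cites Barcel\'o--Fanelli--Guti\'errez--Ruiz--Vilela and Iwatsuka. Your direct computation is the standard transversal-gauge argument behind those references: split $\partial_jA_k=\partial_kA_j+B_{jk}$, recognize $s\frac{d}{ds}A_j(sx)$, and integrate by parts in $s$. The algebra in your first paragraph is correct. However, the two points you leave to ``bookkeeping'' are defects of the statement that your own computation exposes, and one step you sketch would fail.

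\textbf{The sign of the first identity.} With \eqref{def-B} and \eqref{def-psi} exactly as written, i.e.\ $B_{jk}=\partial_jA_k-\partial_kA_j$ and $\Psi_k=\sum_jx_jB_{jk}$, your derivation gives $A-\nabla\varphi=+\int_0^1\Psi(sx)\,ds$, and that is the true identity. Check it on $A=\frac12(-x_2,x_1)$ in $\R^2$: here $x\cdot A=0$, so $\varphi\equiv0$ and $\tilde A=A$, while $\Psi=(-x_2,x_1)$ and $\int_0^1\Psi(sx)\,ds=A$. No tracking of indices will produce the stated minus sign. That sign comes from the convention of the cited sources, $B=DA-DA^\top$ with $(DA)_{jk}=\partial_kA_j$, which reverses the signs of $B$ and $\Psi$. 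Say this explicitly rather than promising agreement. The sign is harmless later in the paper, since only $x\cdot\tilde A=0$, $\mathbf e_1\cdot\tilde A=0$ and size bounds on $\tilde A$ are used.

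\textbf{The second identity.} As written with $A$, it is false: its left side depends on the gauge and its right side does not. For $A=\nabla\chi$ with $\chi=\sin x_1$ one has $B=0$ and $\Psi=0$, but $x^\top\nabla A=(-x_1\sin x_1,0,\dots,0)$. So your step of relating $x^\top\nabla\tilde A$ to $x^\top\nabla A$ ``through the same antisymmetrization'' cannot work. The difference is $x^\top\nabla^2\varphi$, a Hessian term that the antisymmetrization does not see and that does not vanish. The identity must be read with $\tilde A$, which is how the Corollary uses it.

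With $\tilde A$, your first route finishes at once. Since $x\cdot\tilde A=0$ and $B$ is unchanged by the gauge,
$(x^\top\nabla\tilde A)_k=\partial_k(x\cdot\tilde A)-\tilde A_k+\Psi_k=\Psi_k-\int_0^1\Psi_k(sx)\,ds$.
Under the paper's conventions this is the stated formula with the opposite sign, consistent with the first identity. A cleaner rigorous version avoids differentiating $\Psi$ at all. Write $x=r\omega$; with the sign you found, $r\tilde A(r\omega)=\int_0^r\Psi(\rho\omega)\,d\rho$. Differentiating in $r$ gives $x\cdot\nabla\tilde A=\Psi-\tilde A$, using only continuity of $\Psi$. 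Your integration by parts in $s$ instead differentiates $\Psi$, which needs second derivatives of $A$. With the radial argument, for $A\in C^1_b$ neither second derivatives nor your mollification argument are needed.
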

\begin{remark}
    Under the assumption $A\in C_{b}^{1}(\mathbb{R}^d)$, the condition \eqref{bound A} holds automatically.
\end{remark}
\begin{remark}
If the magnetic field $B$ satisfies $\mathbf{e}_1\cdot B=0$, then we have $e_1\cdot \tilde A=0$ with $\tilde A$ given  as in  Lemma \ref{gauge}.
\end{remark}

\begin{corollary}
Under the same assumption as Lemma \ref{gauge}, we have the following transversal conditions,
$
x\cdot\tilde{A}(x)=0,\,\, x\cdot (x^{\top}\nabla \tilde A(x))=0.
$
\end{corollary}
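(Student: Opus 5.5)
We need to prove the Corollary, namely $x\cdot\tilde A(x)=0$ and $x\cdot(x^\top\nabla\tilde A(x))=0$, where $\tilde A = -\int_0^1\Psi(sx)\,ds$ comes from Lemma \ref{gauge}. Both identities should follow from the antisymmetry of $B$.

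\textbf{First identity.} Since $B_{jk}=-B_{kj}$, we have
\[
x\cdot\Psi(x)=\sum_{j,k}x_jx_kB_{jk}(x)=0 .
\]
Next, $\Psi(sx)=(sx)^\top B(sx)=s\,x^\top B(sx)$, so
\[
x\cdot\Psi(sx)=s\sum_{j,k}x_jx_kB_{jk}(sx)=0
\]
by the same antisymmetry. Integrating in $s\in[0,1]$ gives $x\cdot\tilde A(x)=-\int_0^1 x\cdot\Psi(sx)\,ds=0$. This is exactly the Cronstr\"om gauge condition.

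\textbf{Second identity.} I read $x^\top\nabla\tilde A$ as the vector with $k$-th component $\sum_j x_j\partial_j\tilde A_k$. Then
\[
x\cdot(x^\top\nabla\tilde A)=\sum_k x_k\,(x\cdot\nabla)\tilde A_k .
\]
Write $g(x):=x\cdot\tilde A(x)$, which vanishes identically by the first step, and apply the radial derivative $x\cdot\nabla$ to it:
\[
0=(x\cdot\nabla)g=\sum_k x_k\,(x\cdot\nabla)\tilde A_k+\sum_k\tilde A_k\,(x\cdot\nabla)x_k=x\cdot(x^\top\nabla\tilde A)+x\cdot\tilde A .
\]
The last term is zero, so the second identity follows.

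\textbf{Regularity.} Differentiating requires $\tilde A$ to be $C^1$, or at least differentiable along rays. Under $A\in C^1_b$ this holds: $\tilde A=A-\nabla\varphi$ with $\varphi=x\cdot\int_0^1A(sx)\,ds$. If only the a.e.\ hypotheses \eqref{bound A} are available, I would instead differentiate $\tilde A(x)=-\int_0^1 s\,x^\top B(sx)\,ds$ along the ray $r\mapsto rx$, where the radial derivative exists.

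As a cross-check, one can avoid differentiating $g$ and compute directly. Substituting $\tau=rs$ in $\tilde A(rx)=-\int_0^1 rs\,x^\top B(rsx)\,ds$ gives $r\tilde A(rx)=-\int_0^r\tau\,x^\top B(\tau x)\,d\tau$. Differentiating at $r=1$ yields $x^\top\nabla\tilde A=-\Psi-\tilde A$, which is the second formula of Lemma \ref{gauge} applied to $\tilde A$ (the field of $\tilde A$ is still $B$). Dotting with $x$ and using $x\cdot\Psi=0$ and $x\cdot\tilde A=0$ again gives zero.

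The only delicate point is the regularity needed to justify differentiating under the integral sign. I expect this to be routine given $A\in C^1_b$.
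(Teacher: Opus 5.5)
Your proof is correct. The paper gives no separate proof; the corollary is meant to be read off from Lemma \ref{gauge}. Your first identity is argued exactly that way: dot $\tilde A=-\int_0^1\Psi(sx)\,ds$ with $x$ and use $x\cdot\Psi(sx)=s\,x^\top B(sx)\,x=0$.

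For the second identity, the intended route is the one in your cross-check. Apply the second formula of Lemma \ref{gauge} to the Cronstr\"om field $\tilde A$; as printed the lemma has $A$, which is false for a general potential (e.g.\ $A=\nabla\chi$). Then dot $-\Psi(x)+\int_0^1\Psi(sx)\,ds$ with $x$, and antisymmetry kills both terms, so no differentiation occurs at the level of the corollary.

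Your main argument instead differentiates $r\mapsto rx\cdot\tilde A(rx)\equiv0$ at $r=1$. This shows that the second condition is a formal consequence of the first, for any field that is Cronstr\"om along rays and radially differentiable. It needs neither the explicit formula for $x^\top\nabla\tilde A$ nor a second appeal to antisymmetry.

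One correction to your regularity remark: $A\in C^1_b$ does not make $\tilde A$ of class $C^1$. Writing $\tilde A=A-\nabla\varphi$ would require $\varphi\in C^2$, which $A\in C^1$ does not give. Since $\tilde A=-\int_0^1 s\,x^\top B(sx)\,ds$ inherits no more than the regularity of $B$ in non-radial directions, in general it is only continuous. What does hold, and is all you need, is that $r\mapsto\tilde A(rx)$ is $C^1$. This follows from your own formula $r\tilde A(rx)=-\int_0^r\tau\,x^\top B(\tau x)\,d\tau$ with $B$ continuous. Accordingly, $x^\top\nabla\tilde A$ should be read as this radial derivative.

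Incidentally, with the paper's convention $B_{jk}=\partial_jA_k-\partial_kA_j$ one actually gets $A-\nabla\varphi=+\int_0^1\Psi(sx)\,ds$. Both identities are insensitive to this sign.
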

We note that the gauge transform preserves the self-adjointness of $-\Delta_A-V_1$.
\begin{lemma}[Self-adjointness of gauge operator,\cite{BFGRV-JFA}]
Let $A=A(x)=(A_1(x),\dots,A_d(x)):\R^d\to\R^d$, $V_1:\R^d\to\R$ and denote by $ B=\nabla A-(\nabla A)^{\top}$.  For $d\geq2$, assume that  $
\int_0^1 A(sx)\,ds\in\R^d
$
is finite for almost every $x\in\R^d$. Furthermore, we assume that
$V_1\in L^\infty$  and $	\Psi\in L^\infty$. Then for
quadratic form
\begin{align*}
	\widetilde q(\varphi,\psi)
	:=
	\int\nabla_{\widetilde A}\varphi\cdot\overline{\nabla_{\widetilde A}\psi}\,dx
	+\int V_1\varphi\overline\psi\,dx,
\end{align*}
$\widetilde q$ is the form associated to a unique self-adjoint operator
$H_{\widetilde A}=-\Delta_{\widetilde A}-V_1(x)$, with form domain $H^1(\R^d)$.
\end{lemma}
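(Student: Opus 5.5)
The statement to prove is the last lemma: the form $\widetilde q$ defines a unique self-adjoint operator $H_{\widetilde A}=-\Delta_{\widetilde A}-V_1$ with form domain $H^1(\R^d)$. The plan is to apply the KLMN theorem, i.e.\ the representation theorem for closed, semibounded, densely defined Hermitian forms. So it suffices to show that $\widetilde q$, defined on $H^1(\R^d)$, is Hermitian, bounded below, and closed. The key quantitative input is the uniform bound on the Cronstr\"om-gauge potential $\widetilde A$ coming from Lemma \ref{gauge}.

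\textbf{Step 1: $\widetilde A$ is bounded.} By Lemma \ref{gauge}, whose hypotheses \eqref{bound A} hold since $\int_0^1A(sx)\,ds$ is finite a.e.\ and $\Psi\in L^\infty$, we have $\widetilde A(x)=-\int_0^1\Psi(sx)\,ds$. Hence $|\widetilde A(x)|\le\|\Psi\|_{L^\infty}$, so $\widetilde A\in L^\infty$. Here the sign convention of the gauge, $A\mp\nabla\varphi$, is irrelevant.

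\textbf{Step 2: the form is well defined and Hermitian on $H^1$.} For $\varphi\in H^1$ we have $\nabla_{\widetilde A}\varphi=\nabla\varphi-i\widetilde A\varphi\in L^2$. Moreover
\[
\|\nabla_{\widetilde A}\varphi\|_{L^2}\le\|\nabla\varphi\|_{L^2}+\|\Psi\|_{L^\infty}\|\varphi\|_{L^2}
\quad\text{and}\quad
\|\nabla\varphi\|_{L^2}\le\|\nabla_{\widetilde A}\varphi\|_{L^2}+\|\Psi\|_{L^\infty}\|\varphi\|_{L^2}.
\]
Since $V_1$ is real and bounded, $\widetilde q$ is Hermitian, and $H^1$ is dense in $L^2$.

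\textbf{Step 3: lower bound and closedness.} We have $\widetilde q(\varphi,\varphi)\ge-\|V_1\|_{L^\infty}\|\varphi\|_{L^2}^2$. Setting $c=\|V_1\|_{L^\infty}+1$, Step 2 gives the two-sided comparison $\widetilde q(\varphi,\varphi)+c\|\varphi\|_{L^2}^2\sim\|\varphi\|_{H^1}^2$ (expand the squares, using $(a+b)^2\le 2a^2+2b^2$). Thus the form norm is equivalent to the $H^1$ norm. Since $H^1$ is complete, the form is closed on $H^1$.

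\textbf{Step 4: conclusion.} KLMN / Kato's representation theorem yields a unique self-adjoint operator $H_{\widetilde A}$, bounded below by $-\|V_1\|_{L^\infty}$, with $\langle H_{\widetilde A}\varphi,\psi\rangle=\widetilde q(\varphi,\psi)$ for $\varphi\in D(H_{\widetilde A})$, and with form domain $H^1$. Integrating by parts against $C_0^\infty$ test functions identifies $H_{\widetilde A}=-\Delta_{\widetilde A}-V_1$ in the distributional sense.

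The dimension restriction $d\ge2$ plays no role beyond the setting of the paper. The only delicate point is Step 1: boundedness of $\widetilde A$ must come from $\Psi\in L^\infty$ via the Cronstr\"om representation, not from $A$ itself. This is exactly what makes the form domain equal to $H^1$ rather than a magnetic Sobolev space. The rest of the argument is routine.
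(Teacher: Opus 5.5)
Your argument is correct and is the standard one behind this result, which the paper imports from \cite{BFGRV-JFA} without giving a proof. The Cronstr\"om representation $\widetilde A=-\int_0^1\Psi(sx)\,ds$ gives $\widetilde A\in L^\infty$, so $\widetilde q$ is a bounded perturbation of the Dirichlet form with form norm equivalent to the $H^1$ norm, and Kato's representation theorem (KLMN) then yields the self-adjoint operator with form domain $H^1$.

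One correction to your Step 4. Integrating by parts against test functions gives $\widetilde q(\varphi,\psi)=\langle(-\Delta_{\widetilde A}+V_1)\varphi,\psi\rangle$, so the operator associated with $\widetilde q$ as written is $-\Delta_{\widetilde A}+V_1$. The $-V_1$ in the statement is therefore a sign typo: the form should carry $-\int V_1\varphi\overline\psi\,dx$, matching $H=-\Delta_A-V$ for the flow $i\partial_t u+\Delta_A u+Vu=0$. You should flag this rather than claim your integration by parts verifies it. Nothing else in your argument changes, since $\pm V_1$ are both real and bounded.
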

Finally, we recall the Appell transform adapted to the covariant Schr\"odinger flow.
\begin{lemma}[\cite{BFGRV-JFA}]\label{Appell}
Let $A(t,x)=(A_1(t,x),\cdots,A_d(t,x)):\R^{d+1}\to\R^d$, $V=V(t,x):\R^{d+1}\to\C$ and $F=F(t,x):\R^{d+1}\to\C$. Suppose that $u(t,x)$ is a solution to 
\begin{align*}
	\partial_tu=i(\Delta_A u+V(t,x)u+F(t,x)).
\end{align*}
Under the following transformation
\begin{align*}
	\tilde{u}(t,x)=\Big(\frac{\sqrt{\alpha\beta}}{\alpha(1-t)+\beta t}\Big)^\frac{d}{2}u\Big(\frac{\sqrt{\alpha\beta}x}{\alpha(1-t)+\beta t},\frac{\beta t}{\alpha(1-t)+\beta t}\Big)e^{\frac{(\alpha-\beta)|x|^2}{4i(\alpha(1-t)+\beta t)}},\,\,\alpha,\beta>0,
\end{align*}
then  $\tilde{u}(t,x)$ solves the modified covariant Schr\"odinger equation
\begin{align*}
	\partial_t\tilde{u}=i\Big(\Delta_{\tilde{A}}\tilde{u}+\frac{(\alpha-\beta)\tilde{A}\cdot x}{\alpha(1-t)+\beta t}\tilde{u}+\tilde{V}(t,x)\tilde{u}+\tilde{F}(t,x)\Big),
\end{align*}
where 
\begin{align*}
	\tilde{A}(t,x)&=\frac{\sqrt{\alpha\beta}}{\alpha(1-t)+\beta t}A\Big(\frac{\sqrt{\alpha\beta}x}{\alpha(1-t)+\beta t},\frac{\beta t}{\alpha(1-t)+\beta t}\Big)\\
	\tilde{V}(t,x)&=\frac{\alpha\beta}{(\alpha(1-t)+\beta t)^2}V\Big(\frac{\sqrt{\alpha\beta}x}{\alpha(1-t)+\beta t},\frac{\beta t}{\alpha(1-t)+\beta t}\Big)\\
	\tilde{F}(t,x)&=\Big(\frac{\sqrt{\alpha\beta}}{\alpha(1-t)+\beta t}\Big)^\frac{2+d}{2}F\Big(\frac{\sqrt{\alpha\beta}x}{\alpha(1-t)+\beta t},\frac{\beta t}{\alpha(1-t)+\beta t}\Big)e^{\frac{(\alpha-\beta)|x|^2}{4i(\alpha(1-t)+\beta t)}}.
\end{align*}
\end{lemma}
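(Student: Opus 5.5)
The plan is to prove the lemma by direct computation: substitute the ansatz for $\tilde u$ into both sides of the claimed equation and check that they agree, using only the equation satisfied by $u$. To organize this I set
\[
\gamma(t)=\alpha(1-t)+\beta t,\qquad s(t)=\frac{\beta t}{\gamma(t)},\qquad y(t,x)=\frac{\sqrt{\alpha\beta}\,x}{\gamma(t)},\qquad \Phi(t,x)=\frac{(\alpha-\beta)|x|^2}{4i\gamma(t)} .
\]
Then $\tilde u=(\sqrt{\alpha\beta}/\gamma)^{d/2}\,u(y,s)\,e^{\Phi}$, where I write $u(y,s)$ for $u$ evaluated at position $y$ and time $s$, with the argument order as in the statement. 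The elementary identities I will use are
\[
\gamma'=\beta-\alpha,\qquad s'=\frac{\alpha\beta}{\gamma^2},\qquad \partial_t y=-\frac{(\beta-\alpha)}{\gamma}\,y,\qquad \nabla_x y=\frac{\sqrt{\alpha\beta}}{\gamma}\,I,\qquad \nabla\Phi=\frac{(\alpha-\beta)x}{2i\gamma},\qquad \partial_t\Phi=-\frac{(\beta-\alpha)}{\gamma}\,\Phi .
\]
Here $s'=\alpha\beta/\gamma^2$ follows from $\beta\gamma-\beta t(\beta-\alpha)=\alpha\beta$.

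First I compute $\partial_t\tilde u$. It has four contributions:
\begin{itemize}
\item from the prefactor: $-\tfrac d2\,\tfrac{\beta-\alpha}{\gamma}\,\tilde u$;
\item from the time slot: $s'\,(\partial_s u)$, which I replace by $i s'(\Delta_A u+Vu+F)$ using the equation for $u$;
\item from the moving spatial variable: $-\tfrac{\beta-\alpha}{\gamma}\,y\cdot(\nabla u)$;
\item from the phase: $\partial_t\Phi\,\tilde u$.
\end{itemize}
Next I compute $\Delta_{\tilde A}\tilde u$ with $\tilde A=(\sqrt{\alpha\beta}/\gamma)A(y,s)$. The point of this choice of $\tilde A$ is the covariant chain rule: with $\nabla_y$ acting on the first slot of $u$, $D_A$ denoting the covariant gradient of $u$, and $c=\sqrt{\alpha\beta}/\gamma$,
\[
(\nabla_x-i\tilde A)\big[u(y,s)\big]=c\,\big(D_Au\big)(y,s).
\]
Hence, for $w=u(y,s)$,
\[
\Delta_{\tilde A}\big(w\,e^{\Phi}\big)=e^{\Phi}\Big(c^2\,(\Delta_A u)(y,s)+2\,c\,\nabla\Phi\cdot (D_Au)(y,s)+(\Delta\Phi+|\nabla\Phi|^2)\,w\Big).
\]
Multiplying by $i$, the term $i c^2\Delta_A u$ matches $i s'\Delta_A u$ because $c^2=s'$. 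The same factor $s'=\alpha\beta/\gamma^2$ produces $\tilde V$, and the leftover power $(\sqrt{\alpha\beta}/\gamma)^{(2+d)/2}$ produces $\tilde F$ with its phase.

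The remaining terms must cancel, and this is where the magnetic correction appears. Split $2c\,\nabla\Phi\cdot D_Au=2c\,\nabla\Phi\cdot\nabla_y u-2ic\,\nabla\Phi\cdot A\,u$.
\begin{itemize}
\item Since $c\,x=y\cdot\gamma/\sqrt{\alpha\beta}\cdot c=y$, we have $2ic\,\nabla\Phi\cdot\nabla_y u=\tfrac{(\alpha-\beta)}{\gamma}\,y\cdot\nabla u$. This exactly cancels the transport term $-\tfrac{\beta-\alpha}{\gamma}\,y\cdot\nabla u$ from $\partial_t\tilde u$.
\item The other piece equals $\tfrac{(\alpha-\beta)}{\gamma}\,(A\cdot c\,x)\,u=\tfrac{(\alpha-\beta)\tilde A\cdot x}{\gamma}\,u$. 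It has no counterpart in $\partial_t\tilde u$, so it must be moved to the right-hand side. This is precisely the extra potential $\tfrac{(\alpha-\beta)\tilde A\cdot x}{\alpha(1-t)+\beta t}\tilde u$ in the statement, appearing with the correct sign.
\item Finally, $i\Delta\Phi=\tfrac{d(\alpha-\beta)}{2\gamma}$ cancels the prefactor derivative $-\tfrac d2\tfrac{\beta-\alpha}{\gamma}$.
\item What is left is $i|\nabla\Phi|^2=-i\tfrac{(\alpha-\beta)^2|x|^2}{4\gamma^2}$ against $\partial_t\Phi=\tfrac{i(\alpha-\beta)^2|x|^2}{4\gamma^2}\cdot(-1)$. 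These agree, which is the same algebra as in the free Appell transform.
\end{itemize}

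The main obstacle is bookkeeping, in particular the factors of $i$ and the sign of the cross term $\nabla\Phi\cdot A$. To handle it, I would first redo the $A=0$ case as a sanity check, since it reduces to the classical Appell/lens transform. I would then track only the extra terms produced by replacing $\nabla$ with $\nabla-i\tilde A$. Since $\tilde A$ is real, the only new terms are the covariant chain rule, which is harmless because $\tilde A$ is defined as the rescaled potential, and the single cross term $-2i\tilde A\cdot\nabla\Phi$, which yields the stated extra potential.
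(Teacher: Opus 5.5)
Your route is the standard one: direct substitution, with the covariant chain rule $(\nabla_x-i\tilde A)[u(y,s)]=c\,(D_Au)(y,s)$ as the key point. The paper gives no proof of its own and only cites BFGRV. However, the step you single out as the main obstacle is computed incorrectly. With $\nabla\Phi=\frac{(\alpha-\beta)x}{2i\gamma}$,
\[
-2ic\,\nabla\Phi\cdot A(y,s)\,u(y,s)=-\frac{\alpha-\beta}{\gamma}\,\big(c\,x\cdot A(y,s)\big)\,u(y,s).
\]
So $\Delta_{\tilde A}\tilde u$ contains $-\frac{(\alpha-\beta)\tilde A\cdot x}{\gamma}\tilde u$, not $+\frac{(\alpha-\beta)\tilde A\cdot x}{\gamma}u$ as you write. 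Equivalently, $i\Delta_{\tilde A}\tilde u$ contains $-i\frac{(\alpha-\beta)\tilde A\cdot x}{\gamma}\tilde u$. Nothing in $\partial_t\tilde u$ matches this term, so it must be cancelled by adding $+\frac{(\alpha-\beta)\tilde A\cdot x}{\gamma}\tilde u$ inside the bracket, which is the stated term.

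Starting from your value, the same reasoning would give the opposite sign. The phrase ``moved to the right-hand side'' hides a second sign slip that happens to compensate. Your proposed $A=0$ sanity check cannot detect this. The case $d=1$, $A\equiv a$ constant does:
\begin{itemize}
\item take $u=e^{iay}w$ with $w$ a free solution;
\item then $\tilde u=e^{i\tilde A x}\tilde w$, where $\tilde w$ is the $A=0$ Appell transform of $w$;
\item hence $\partial_t\tilde u=i\big(\Delta_{\tilde A}\tilde u+x\,\partial_t\tilde A\,\tilde u\big)$ with $\partial_t\tilde A=\frac{\alpha-\beta}{\gamma}\tilde A$, confirming the $+$ sign.
\end{itemize}

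Your $\tilde F$ check is also not valid. The only $F$-contribution to $\partial_t\tilde u$ is $c^{d/2}e^{\Phi}\,s'\,iF(y,s)=i\,c^{\frac d2+2}F(y,s)e^{\Phi}$, because $s'=c^2$. This is the same factor that produces $\tilde V=c^2V(y,s)$. A correct computation therefore yields $\tilde F=\big(\frac{\sqrt{\alpha\beta}}{\alpha(1-t)+\beta t}\big)^{\frac{d+4}{2}}F(y,s)e^{\Phi}$. The exponent $\frac{2+d}{2}$ in the statement is a misprint, and there is no ``leftover power'' $\frac{2+d}{2}$. This is harmless for the paper, which only applies the lemma with $F=0$. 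Still, your proof should detect the discrepancy rather than assert agreement.

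A minor point: write $\nabla\Phi\cdot\nabla\Phi$ instead of $|\nabla\Phi|^2$, since $\Phi$ is imaginary. The value you actually use is the bilinear one, so that bullet is correct. So are your transport, $\Delta\Phi$ and $\partial_t\Phi$ cancellations.
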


\section{Carleman estimates for covariant Schr\"odinger flow}
In this section, we establish a Carleman estimate by adapting the strategy introduced in \cite{KPV-14}. 
\begin{theorem}\label{classical Carleman}
Suppose that $R$ is a  parameter and  $\varphi(t):[0,1]\to\mathbb{R}$ is a smooth function. Denote $x=(x_{1},x_{2},\cdots,x_{d})$, then there exists a constant $C_{1}=C_{1}(d,\|\varphi\|_{C^{2}_{b}([0,1])})>0$ and for some $x_{0,1}\in\R$ such that 
\begin{equation}\label{carleman estimate pw}
	\frac{\sigma^{\frac{3}{2}}}{R^{2}}\Big\|e^{\sigma|\frac{x_{1}-x_{0,1}}{R}+\varphi(t)|^{2}}h\Big\|_{L^{2}([0,1]\times\mathbb{R}^{d})}\leq C\|e^{\sigma|\frac{x_{1}-x_{0,1}}{R}+\varphi(t)|^{2}}(i\partial_{t}+\Delta_{A})h\|_{L^{2}([0,1]\times\mathbb{R}^{d})}
\end{equation}
where $\Delta_A=(\nabla-iA)^2$ with $A:\R^{d+1}\to\R^d$, $\partial_{t}A_{1}\in L^{\infty}_{t,x}$, $\sigma\geq C_{1}R^{2}$ and $h\in C^{\infty}_{c}(\mathbb{R}^{d
	+1})$ with support on 
\begin{equation}\label{compact-support}
	\Big\{(x,t)=(x_{1},\cdots,x_{d},t)\in\mathbb{R}^{d+1}:\Big|\frac{x_{1}-x_{0,1}}{R}+\varphi(t)\Big|\geq1\Big\}.
\end{equation}
\end{theorem}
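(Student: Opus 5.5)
Following the strategy of \cite{KPV-14}, we conjugate by the weight and split the conjugated operator into symmetric and antisymmetric parts. Set $\phi(x_1,t)=\sigma\big|\frac{x_1-x_{0,1}}{R}+\varphi(t)\big|^2$, $f=e^{\phi}h$, and
\begin{align*}
e^{\phi}(i\partial_t+\Delta_A)e^{-\phi}f=\mathcal{S}f+\mathcal{A}f .
\end{align*}
Here $\mathcal{S}=\Delta_A+(\partial_{x_1}\phi)^2$ is symmetric, and
\begin{align*}
\mathcal{A}=-i\partial_t\phi-2\partial_{x_1}\phi\, D_1-\partial_{x_1}^2\phi
\end{align*}
is antisymmetric, with $D_1=\partial_1-iA_1$. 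The weight depends only on $x_1$ and $t$, so only the first covariant derivative $D_1$ couples to it.

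The standard identity gives
\begin{align*}
\|(\mathcal{S}+\mathcal{A})f\|^2\ge \langle[\mathcal{S},\mathcal{A}]f,f\rangle .
\end{align*}
The task is then to compute the commutator. For the flat Laplacian this is the computation in \cite{KPV-14}. Writing $\Delta_A=\sum_j D_j^2$, the terms with $j\ge2$ in $\Delta_A$ commute with multiplication by functions of $(x_1,t)$. They fail to commute with $D_1$ only through $[D_j,D_1]=-iB_{j1}$. Under the hypothesis used here ($\mathbf e_1^\top B=0$, or after passing to a gauge with $\tilde A_1$ controlled), these terms vanish or are lower order.

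What remains is the one-dimensional computation in the variable $x_1$, with $\partial_1$ replaced by $D_1$. It produces the positive main terms
\begin{align*}
4\int\partial_{x_1}^2\phi\,|D_1f|^2+4\int(\partial_{x_1}^2\phi)(\partial_{x_1}\phi)^2|f|^2 ,
\end{align*}
together with mixed terms $\int(\partial_t^2\phi+\dots)|f|^2$ and a term $\int\partial_t\partial_{x_1}\phi\,\mathrm{Im}(\bar f D_1f)$.

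Next we size these terms. Since $\partial_{x_1}^2\phi=2\sigma/R^2$ and $(\partial_{x_1}\phi)^2=4\sigma^2R^{-2}|\frac{x_1-x_{0,1}}{R}+\varphi|^2\ge4\sigma^2/R^2$ on the support \eqref{compact-support}, the main zeroth-order term is at least $c\,\sigma^3/R^4\|f\|^2$. The time derivatives are of size $\partial_t^2\phi\lesssim\sigma\|\varphi\|_{C^2}^2(1+|\cdot|^2)$. Weighted by $|\cdot|^2$, they are absorbed into $\sigma^3R^{-4}|\cdot|^2$ once $\sigma\ge C_1R^2$. The cross term $\partial_t\partial_{x_1}\phi\,\bar fD_1f$ is handled by Cauchy--Schwarz against $\frac{\sigma}{R^2}|D_1f|^2$ and the zeroth-order term.

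The time dependence of $A$ creates one extra contribution. The commutator $[i\partial_t,D_1]$ brings in $\partial_tA_1$, which gives a term $\int\partial_{x_1}\phi\,\partial_tA_1|f|^2$. This is bounded by
\begin{align*}
\frac{\sigma}{R}\|\partial_tA_1\|_\infty\big\|\,|\cdot|^{1/2}f\big\|^2 ,
\end{align*}
and is again absorbed by $\sigma^3R^{-4}$ for $\sigma\ge C_1R^2$, with $C_1$ also depending on $\|\partial_tA_1\|_\infty$. Combining these bounds gives $\sigma^{3}R^{-4}\|f\|^2\lesssim\|e^\phi(i\partial_t+\Delta_A)h\|^2$, and taking square roots yields \eqref{carleman estimate pw}.

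The main obstacle is the magnetic part of the commutator. The terms $[D_j^2,D_1]$ for $j\ge2$ produce $B_{j1}$ and $\partial_jB_{j1}$ terms multiplied by $\partial_{x_1}\phi\sim\sigma/R$ and by $D_jf$. Such terms cannot be absorbed unless they vanish. I would therefore first verify, via the hypothesis $\mathbf e_1^\top B=0$ and Lemma \ref{gauge} (which gives $\tilde A_1=0$), that all these terms drop out. If they do not drop out exactly, I would try to control them by smallness of $\Psi$ as in Assumption \ref{assumption on A and B}.
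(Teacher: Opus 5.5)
Your proposal is correct and follows essentially the paper's argument. The paper conjugates $\partial_t+i\Delta_A$ (your operator up to a factor of $i$) by $e^{\psi}$ and uses $\|\partial_tf-(\mathcal S+\mathcal A)f\|^2\ge(\mathcal S_tf+[\mathcal S,\mathcal A]f,f)$. It obtains the same positive terms $\frac{8\sigma}{R^2}\|D_1f\|^2+\frac{32\sigma^3}{R^4}\int|\frac{x_1-x_{0,1}}{R}+\varphi|^2|f|^2$ and absorbs the $\varphi'$, $\varphi''$ and $\partial_tA_1$ terms for $\sigma\ge C_1R^2$, so $C_1$ does depend on $\|\partial_tA_1\|_\infty$, as you note. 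It also kills the magnetic term $\mathbf e_1^\top B\cdot\overline{\nabla_Af}\,f$ by the condition $\mathbf e_1^\top B=0$, which it imports from the standing assumption on $B$ just as you do.

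Two small corrections. Your displayed symmetric part should read $\mathcal S=i\partial_t+\Delta_A+(\partial_{x_1}\phi)^2$; your later use of $[i\partial_t,D_1]$ shows this is what you mean. Also, $\mathbf e_1^\top B=0$ together with antisymmetry gives $B_{j1}=0$, hence $[D_j,D_1]=-iB_{j1}=0$ exactly, so neither a gauge choice nor smallness of $\Psi$ is needed.
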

%\begin{remark}
%    We remark that the magnetic potential appeared in the following theorem is not exactly the same potential as in \eqref{eq1}. In the proof of Theorem \ref{thm-P}, when we apply this Carleman estimate, the magnetic potential we actually use is obtained after applying a scaling transformation and the Appell transformation to the magnetic potential in \eqref{eq1}.
%\end{remark}
\begin{proof}
Let  $f(x,t)=e^{\psi}h$ with $\psi(x,t)=\sigma|\frac{x_{1}-x_{0,1}}{R}+\varphi(t)|^{2}$, then the Carleman estimate reduces to showing
\begin{equation}
	\frac{\sigma^{\frac{3}{2}}}{R^{2}}\|f\|_{L^{2}([0,1]\times\mathbb{R}^{d})}\leq C\|e^{\psi}(\partial_{t}+i\Delta_{A})(e^{-\psi}f)\|_{L^{2}([0,1]\times\mathbb{R}^{d})}.
\end{equation}
% For equation
%     \begin{equation}
	%         \partial_{t}u=i\Delta_{A}u-iV(x,t)u-iF(x,t),
	%     \end{equation}
%   by change of variables $v=e^{\psi}u$ where $\psi$ is a smooth real-valued function, then $v$ satisfies 
%     \begin{align}
	%         \partial_{t}v=&ie^{\psi}\Delta_{A}(e^{-\psi}v)-iV(x,t)v-ie^{\psi}F(x,t)\\=&  
	%         (\mathcal{S}+\mathcal{A})v-iV(x,t)v-ie^{\psi}F(x,t),
	%     \end{align}
%     where
Moreover, for a general functions $f$, one has
\begin{align}
	e^{\psi}(\partial_{t}+i\Delta_{A})(e^{-\psi}f)=\partial_{t}f-(\mathcal{S}+\mathcal{A})f,
\end{align}
where 
$\mathcal{S}=-i(\Delta_{x}\psi+2\nabla_{x}\psi\cdot\nabla_{A})+\psi_{t}$ and $
\mathcal{A}=i(\Delta_{A}+|\nabla_{x}\psi|^{2}).$
One can verify that
\begin{equation}\begin{aligned}
		&\|e^{\psi}(\partial_{t}+i\Delta_{A})e^{-\psi}f\|^{2}_{L^{2}([0,1]\times\mathbb{R}^{d})}=
		\|\partial_{t}f-(\mathcal{S}+\mathcal{A})f\|_{L^{2}([0,1]\times\mathbb{R}^{d})}^{2}\\=&\|(\partial_{t}-\mathcal{A})f\|_{L^{2}([0,1]\times\mathbb{R}^{d})}^{2}-(\partial_{t}f-\mathcal{A}f,\mathcal{S}f)+\|\mathcal{S}f\|^{2}_{L^{2}([0,1]\times\mathbb{R}^{d})}-(\mathcal{S}f,\partial_{t}f-\mathcal{A}f)\\
		\geq&\int_{\mathbb{R}^{d+1}}\big(\mathcal{S}_{t}f\bar{f}+[\mathcal{S},\mathcal{A}]f\bar{f}\big)\,\dd x\dd t.\label{commutator}
	\end{aligned}
\end{equation}
By direct computation, we have 
\begin{gather}
	\nabla_{x}\psi=2\sigma\Big(\frac{x_{1}-x_{0,1}}{R}+\varphi(t)\Big)\frac{1}{R}\mathbf{e}_{1},\,\,
	(\operatorname{Hess}\psi)_{jk}=\frac{2\sigma}{R^{2}}\delta_{1j}\delta_{1k},\\
	\partial_{t}\nabla_{x}\psi=\frac{2\sigma}{R}\varphi'(t)\textbf{e}_{1},\,\,
	\partial_{t}\psi=2\sigma\Big(\frac{x_{1}-x_{0,1}}{R}+\varphi(t)\Big)\varphi^{\prime}(t),\\
	\partial_{tt}\psi=2\sigma(\varphi^{\prime}(t))^{2}+2\sigma\Big(\frac{x_{1}-x_{0,1}}{R}+\varphi(t)\Big)\varphi^{\prime\prime}(t).
\end{gather}
Hence,
\begin{align}
	\mathcal{S}_{t}=
	%&2\Big(\operatorname{Im}\nabla_{x}\partial_{t}\psi\cdot\nabla_{A}-\nabla_{x}\psi\cdot A_{t}\Big)+\partial_{tt}\psi\\=
	&2\Big(\operatorname{Im}\frac{2\sigma}{R}\varphi'(t)D_{1}-2\sigma\Big(\frac{x_{1}-x_{0,1}}{R}+\varphi(t)\Big)\cdot \partial_{t}A_{1}\Big)+2\sigma(\varphi^{\prime}(t))^{2}+2\sigma\Big(\frac{x_{1}-x_{0,1}}{R}+\varphi(t)\Big)\varphi^{\prime\prime}(t).
\end{align}
Moreover, we have
$$
\int_{0}^{1}\int_{\mathbb{R}^{d}}\mathcal{S}_{t}f\bar{f}\,\dd x\dd t=\frac{4\sigma}{R}\operatorname{Im}\int_{0}^{1}\int_{\mathbb{R}^{d}}\varphi^{\prime}(t)D_{1}f\bar{f}\,\dd x\dd t-4\sigma\int_{0}^{1}\int_{\mathbb{R}^{d}}\Big(\frac{x_{1}-x_{0,1}}{R}+\varphi(t)\Big)\partial_{t}A_{1}|f|^{2}\,\dd x\dd t$$
$$+\int_{0}^{1}\int_{\mathbb{R}^{d}}2\sigma(\varphi^{\prime}(t))^{2}|f|^{2}\,\dd x\dd t+2\sigma\int_{0}^{1}\int_{\mathbb{R}^{d}}\Big(\frac{x_{1}-x_{0,1}}{R}+\varphi(t)\Big)\varphi^{\prime\prime}(t)|f|^{2}\,\dd x\dd t=:\sum_{j=1}^{4}Z_{j},
$$
and
\begin{align*}
& \int_{0}^{1}\int_{\mathbb{R}^{d}}[\mathcal{S},\mathcal{A}]f\bar{f}\,\dd x\dd t\\
=&\frac{8\sigma}{R^{2}}\int_{0}^{1}\int_{\mathbb{R}^{d}}|D_{1}f|^{2}\,\dd x\dd t+\frac{32\sigma^{3}}{R^{4}}\int_{0}^{1}\int_{\mathbb{R}^{d}}|\frac{x_{1}-x_{1,0}}{R}+\varphi(t)|^{2}|f|^{2}\,\dd x\dd t+\frac{4\sigma}{R}\operatorname{Im}\int_{0}^{1}\int_{\mathbb{R}^{d}}\bar{f}\varphi'(t)D_{1}f\,\dd x\dd t\\
&-\frac{8\sigma}{R}\int_{0}^{1}\int_{\mathbb{R}^{d}}\Big(\frac{x_{1}-x_{1,0}}{R}+\varphi(t)\Big)\textbf{e}_{1}B\cdot\overline{\nabla_{A}f}f\,\dd x\dd t=\sum_{j=5}^{8}Z_{j}.
\end{align*}

For $Z_{1}$, using Cauchy-Schwarz's inequality and \eqref{compact-support}, we have
$$
Z_{1}\geq-8\sigma\|\varphi^{\prime}\|^{2}_{L^{\infty}([0,1])}\int_{0}^{1}\int_{\mathbb{R}^{d}}|f|^{2}\,\dd x\dd t-\frac{2\sigma}{R^{2}}\int_{0}^{1}\int_{\mathbb{R}^{d}}|D_{1}f|^{2}\,\dd x\dd t
$$
$$\quad\qquad\quad\quad\geq-8\sigma\|\varphi^{\prime}\|^{2}_{L^{\infty}([0,1])}\int_{0}^{1}\int_{\mathbb{R}^{d}}|\frac{x_{1}-x_{1,0}}{R}
+\varphi|^{2}|f|^{2}\,\dd x\dd t-\frac{2\sigma}{R^{2}}\int_{0}^{1}\int_{\mathbb{R}^{d}}|D_{1}f|^{2}\,\dd x\dd t.
$$
For $Z_{2}$, $Z_{3}, Z_{4}$, the H\"older's inequality implies
$$
Z_{2}+Z_{3}+Z_{4}%\geq&-\sigma\Big(\|\varphi^{\prime}\|_{L^{\infty}([0,1])}+\|\varphi^{\prime\prime}\|_{L^{\infty}([0,1])}+\|\partial_{t}A_{1}\|_{L^{\infty}}\Big)\int_{0}^{1}\int_{\mathbb{R}^{d}}\Big|\frac{x_{1}-x_{0,1}}{R}+\varphi(t)\Big||f|^{2}\,\dd x\dd t\\
\geq-\sigma\Big(\|\varphi^{\prime}\|_{L^{\infty}([0,1])}+\|\varphi^{\prime\prime}\|_{L^{\infty}([0,1])}+\|\partial_{t}A_{1}\|_{L^{\infty}}\Big)\int_{0}^{1}\int_{\mathbb{R}^{d}}\Big|\frac{x_{1}-x_{0,1}}{R}+\varphi(t)\Big|^{2}|f|^{2}\,\dd x\dd t.
$$
For $Z_{7}$, since $\textbf{e}_{1}^{\top}B=0$, one has $Z_7=0$.

For $Z_{8}$, combining \eqref{compact-support} with Cauchy-Schwarz's inequality, we get
$$
Z_{8}%\geq& -8\sigma\|\varphi^{\prime}\|^{2}_{L^\infty([0,1])}\int_{0}^{1}\int_{\mathbb{R}^{d}}|f|^{2}\,\dd x\dd t-\frac{2\sigma}{R^{2}}\int_{0}^{1}\int_{\mathbb{R}^{d}}|D_{1}f|^{2}\,\dd x\dd t\\
\geq-8\sigma\|\varphi^{\prime}\|^{2}_{L^\infty([0,1])}\int_{0}^{1}\int_{\mathbb{R}^{d}}\Big|\frac{x_{1}-x_{0,1}}{R}+\varphi(t)\Big|^{2}|f|^{2}\,\dd x\dd t-\frac{2\sigma}{R^{2}}\int_{0}^{1}\int_{\mathbb{R}^{d}}|D_{1}f|^{2}\,\dd x\dd t.
$$
Combining all the estimates, we get
% \begin{align}
	% 	&\int_{0}^{1}\int_{\mathbb{R}^{d}}\mathcal{S}_{t}f\bar{f}+[\mathcal{S},\mathcal{A}]f\bar{f}\,\dd x\dd t\\
	% 	\geq&\Big(\frac{32\sigma^{3}}{R^{4}}-16\sigma\|\varphi'\|^{2}_{L^{\infty}([0,1])}-\sigma\|\varphi^{\prime\prime}\|_{L^{\infty}([0,1])}-\sigma\|\partial_{t}A_{1}\|_{L^{\infty}}\Big)\int_{0}^{1}\int_{\mathbb{R}^{d}}\Big|\frac{x_{1}-x_{0,1}}{R}+\varphi(t)\Big|^{2}|f|^{2}\,\dd x\dd t\\
	% 	&+\frac{4\sigma}{R^{2}}\int_{0}^{1}\int_{\mathbb{R}^{d}}|D_{1}f|^{2}\,\dd x\dd t.
	% \end{align}
\begin{align*}	
&\int_{0}^{1}\int_{\mathbb{R}^{d}}\mathcal{S}_{t}f\bar{f}+[\mathcal{S},\mathcal{A}]f\bar{f}\,\dd x\dd t\\
\geq&\Big(\frac{32\sigma^{3}}{R^{4}}-16\sigma\|\varphi'\|^{2}_{L^{\infty}([0,1])}-\sigma\|\varphi^{\prime\prime}\|_{L^{\infty}([0,1])}-\sigma\|\partial_{t}A_{1}\|_{L^{\infty}}\Big)\int_{0}^{1}\int_{\mathbb{R}^{d}}\Big|\frac{x_{1}-x_{0,1}}{R}+\varphi(t)\Big|^{2}|f|^{2}\,\dd x\dd t\\
&+\frac{4\sigma}{R^{2}}\int_{0}^{1}\int_{\mathbb{R}^{d}}|D_{1}f|^{2}\,\dd x\dd t.
\end{align*}
To obtain the desired lower bound, taking $\sigma$ such that $\sigma\geq C_{1}R^{2}$, where $C_{1}$ to be determined later, we let
\begin{align}
	\frac{8\sigma^{3}}{R^{4}}\geq 16\sigma\|\varphi^{\prime}\|^{2}_{L^{\infty}([0,1])}+\sigma\|\varphi^{\prime\prime}\|_{L^{\infty}([0,1])}+\sigma\|\partial_{t}A_{1}\|_{L^{\infty}}.
\end{align}
It yields that 
\begin{align}
	\sigma^{2}\geq \frac{R^{4}}{8}(16\|\varphi'\|^{2}_{L^{\infty}}+\|\varphi''\|_{L^{\infty}}+\|\partial_{t}A_{1}\|_{L^{\infty}}).
\end{align}
Hence, we get the explicit dependence of constant $C_1=C_1(d,B,\|\partial_{t}A_{1}\|_{L^\infty},\|\varphi^{\prime}\|_{L^{\infty}},\|\varphi^{\prime\prime}\|_{L^{\infty}})$. Putting these estimates together, we arrive at the final Carleman estimate
\begin{align}
	\|e^{\psi}(\partial_{t}+i\Delta_{A})e^{-\psi}f\|_{L^{2}([0,1]\times\mathbb{R}^{d})}^{2}\geq\frac{8\sigma^{3}}{R^{4}}\|f\|_{L^{2}([0,1]\times\mathbb{R}^{d})}^{2}.
\end{align}
\end{proof}
Consequently, we will show that the Carleman estimate holds for functions with compact supports in $(x_1,t)$ directions.
\begin{corollary}\label{Carleman coro}
Suppose that $h\in L^{2}([0,1],H^1(\mathbb{R}^{d}))$ only has a compact support in variables $(x_1,t)$ and satisfies \eqref{compact-support}
%\begin{align}
%	\operatorname{supp}h\subset\Big\{(x,t)\in\mathbb{R}^{d
	%		+1}:\Big|\frac{x_{1}-x_{0,1}}{R}+\varphi(t)\Big|\geq 1\Big\}
%\end{align}
for some $x_{0,1}\in\R$.
In addition, if $(i\partial_{t}+\Delta_{A})h\in L^{2}(\mathbb{R}^{d+1})$, then the Carleman estimates \eqref{carleman estimate pw} holds for $h$.
\end{corollary}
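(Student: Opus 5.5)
The plan is to deduce Corollary \ref{Carleman coro} from Theorem \ref{classical Carleman} by a density argument: cut off in the unbounded directions $x'=(x_2,\dots,x_d)$, mollify, and pass to the limit. The weight $e^{\psi}$ with $\psi=\sigma|\frac{x_1-x_{0,1}}{R}+\varphi(t)|^2$ depends only on $(x_1,t)$. Since $h$ has compact support in $(x_1,t)$, the weight is bounded above and below by positive constants on the support. Therefore every weighted $L^2$ norm that appears is equivalent to an unweighted one, and it suffices to approximate $h$ by $h_{n}\in C_c^\infty(\mathbb{R}^{d+1})$ such that
\[
h_n\to h \quad\text{and}\quad (i\partial_t+\Delta_A)h_n\to(i\partial_t+\Delta_A)h \quad\text{in } L^2,
\]
with each $h_n$ supported in the region \eqref{compact-support}.

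The first step is the cutoff in $x'$. Take $\chi\in C_c^\infty(\mathbb{R}^{d-1})$ with $\chi=1$ on $B_1$, and set $h^{(\rho)}=\chi(x'/\rho)h$. This keeps the support inside \eqref{compact-support}, because the cutoff does not involve $x_1$ or $t$. A direct computation gives
\[
(i\partial_t+\Delta_A)h^{(\rho)}=\chi(\cdot/\rho)(i\partial_t+\Delta_A)h+\frac{2}{\rho}\nabla\chi(\cdot/\rho)\cdot\nabla_A h+\frac{1}{\rho^2}(\Delta\chi)(\cdot/\rho)h .
\]
The last two terms tend to zero in $L^2$ because $h\in L^2_tH^1_x$ and $A$ is bounded. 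The first term converges by dominated convergence.

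The second step is to mollify $h^{(\rho)}$ in all variables $(x,t)$ with a standard mollifier $\eta_\epsilon$. The support of \eqref{compact-support} is a closed set, so mollification might push the support slightly into the forbidden strip $|\frac{x_1-x_{0,1}}{R}+\varphi(t)|<1$. To handle this, first apply the estimate with a slightly enlarged effective radius. One way is to replace the threshold $1$ by $1-\delta$: the proof of Theorem \ref{classical Carleman} only uses $|\cdot|\geq1$ to bound $|f|^2\le|\cdot|^2|f|^2$, so a threshold $1-\delta$ costs only a harmless constant. Alternatively, keep the support fixed by translating in $x_1$ (shifting $x_{0,1}$) to shrink the support before mollifying.

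The main obstacle is the commutator $[\eta_\epsilon *,\Delta_A]h^{(\rho)}$, since $A$ is only $C^1_b$. Expanding $\Delta_A=\Delta-2iA\cdot\nabla-i(\nabla\cdot A)-|A|^2$, the terms $\Delta$ and $\partial_t$ commute with convolution. The remaining terms have the form
\[
\eta_\epsilon*(A\cdot\nabla h)-A\cdot\nabla(\eta_\epsilon*h),
\]
and similar terms with zeroth-order coefficients $\nabla\cdot A$ and $|A|^2$. These tend to zero in $L^2$ by a Friedrichs-type lemma, because $A\in C^1_b$ and $\nabla h\in L^2$. The time dependence of $A$ assumed in Theorem \ref{classical Carleman}, where $\partial_tA_1\in L^\infty$, is handled the same way.

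To conclude, apply \eqref{carleman estimate pw} to each $h_n$ and let $n\to\infty$. Because the weights are bounded on a fixed compact $(x_1,t)$-set, both sides converge, and the estimate holds for $h$.
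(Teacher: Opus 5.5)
Your proposal is correct and follows essentially the same route as the paper: a cutoff in the transverse variables $x'$ combined with mollification in $(x,t)$. The only nontrivial point in both is that $[\Delta_A,\mathcal K_\delta]h\to0$ in $L^2$, which holds because $A$, $\nabla\cdot A$ and $|A|^2$ are bounded and $\nabla h\in L^2$. The paper mollifies first and cuts off second, which is immaterial. Your explicit relaxation of the support threshold from $1$ to $1-\delta$ is exactly what the paper uses tacitly: it notes only that $\operatorname{supp}h_\delta\subset\{|\frac{x_1-x_{0,1}}{R}+\varphi(t)|^2\geq\frac12\}$ and still applies Theorem \ref{classical Carleman}.
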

\begin{proof}
Denote $x=(x_{1},x^{\prime})$. Let cutoff function $\eta_{1}\in C_{0}^{\infty}(\mathbb{R})$, $\eta_{1}\geq0$ with  $\operatorname{supp}\eta_{1}\subset\{|x_{1}|<1\}$. Assume another cutoff function $\eta_{2}\in C_{0}^{\infty}(\mathbb{R}^{d-1})$, $\eta_{2}\geq0$ with  $\operatorname{supp}\eta_{2}\subset\{|x'|<1\}$. Moreover, $\eta_1$ and $\eta_2$ obey 
\begin{equation}
	\int_{\mathbb{R}}\eta_{1}(x_{1})\,\dd x_{1}=1, \,\,\int_{\mathbb{R}^{d-1}}\eta_{2}(x^{\prime})\,\dd x^{\prime}=1.
\end{equation}
For $\delta>0$, we define a dilation
\begin{equation}
	K_{\delta}(x,t)=\frac{1}{\delta^{d+1}}\eta_{1}\Big(\frac{t}{\delta}\Big)\eta_{1}\Big(\frac{x_{1}}{\delta}\Big)\eta_{2}\Big(\frac{x'}{\delta}\Big)
\end{equation}
and the mollifier of $h$ can be defined via
$
h_\delta=  \mathcal K_\delta h:=K_{\delta}* h.$

Let $\theta\in C_{0}^{\infty}(\mathbb{R}^{d-1})
$, $\theta(x^{\prime})=1$ for $|x^{\prime}|\leq 1$ with $\operatorname{supp}\theta\subset\{|x'|<2\}$. For large $\ell$, we set 
$
h_{\delta,\ell}(x,t)=\theta\Big(\frac{x^{\prime}}{\ell}\Big)h_{\delta}(x,t).
$
For small $\delta$, we know that
\begin{equation}
	\operatorname{supp}h_{\delta}\subset\bigg\{(x,t):\Big|\frac{x_{1}-x_{0,1}}{R}+\varphi(t)\Big|^{2}\geq\frac{1}{2}\bigg\}.
\end{equation}
Also, we have that $h_{\delta,\ell}\in C_{0}^{\infty}(\mathbb{R}^{d+1})$.

Now,  applying  Carleman estimate \eqref{carleman estimate pw} to $h_{\delta,\ell}$ yields
$$
\frac{\sigma^{\frac{3}{2}}}{R^{2}}\Big\|e^{\sigma\big|\frac{x_{1}-x_{0,1}}{R}+\varphi(t)\big|^{2}}h_{\delta,\ell}\Big\|_{L^{2}(\mathbb{R}^{d+1})}\leq \Big\|e^{\sigma\big|\frac{x_{1}-x_{0,1}}{R}+\varphi(t)\big|^{2}}(i\partial_{t}+\Delta_{A})h_{\delta,\ell}\Big\|_{L^{2}([0,1]\times\mathbb{R}^{d})}.
$$
The direct computation gives
\begin{align}
	(i\partial_{t}+\Delta_{A})h_{\delta,\ell}=i\theta(\frac{x'}{\ell})\partial_{t}h_{\delta}+\theta(\frac{x'}{\ell})\Delta_{A}h_{\delta}+\frac{1}{\ell^{2}}\Delta\theta(\frac{x'}{\ell})h_{\delta}+\frac{2}{\ell}\nabla_{x}\theta(\frac{x'}{\ell})\cdot\nabla_{A}h_{\delta}.
\end{align}
Let $\ell\to\infty$, the $L^{2}_{t,x}$ norm of $\frac{1}{\ell^{2}}\Delta\theta(\frac{x'}{\ell})h_{\delta}+\frac{2}{\ell}\nabla_{x}\theta(\frac{x'}{\ell})\cdot\nabla_{A}h_{\delta}$ tends to $0$. Thus, for $h_{\delta}$, we have Carleman estimates
\begin{align}
	\frac{\sigma^{\frac{3}{2}}}{R^{2}}\|e^{\sigma|\frac{x_{1}-x_{0,1}}{R}+\varphi(t)|^{2}}h_{\delta}\|^{2}_{L^{2}([0,1]\times\mathbb{R}^{d})}\leq \|e^{\sigma|\frac{x_{1}-x_{0,1}}{R}+\varphi(t)|^{2}}(i\partial_{t}+\Delta_{A})h_{\delta}\|_{L^{2}([0,1]\times\mathbb{R}^{d})}.
\end{align}
Next, we study the limit of $h_\delta$ as $\delta\to0$. Since convolution cannot commute with the magnetic Schr\"odinger operator, we should analyze the commutator $\big\|[\Delta_{A},\mathcal K_{\delta}]h\big\|_{L^{2}}$. Recalling that,  
$$
\Delta_{A}u=\Delta u-2iA\cdot\nabla u-i(\nabla\cdot A)u-|A|^{2}u,
$$
% we can obtain
% $$
% 	[\Delta_{A},\mathcal K_\delta]=[-2iA\cdot\nabla-i(\nabla\cdot A)-|A|^{2},\mathcal K_\delta],
% $$
and using $[\Delta,\mathcal K_\delta]=0$, it remains to  estimate $\big\|[-2iA\cdot\nabla-i\nabla\cdot A-|A|^{2},\mathcal K_\delta]h\big\|_{L^{2}}$. By definition, we have 
$$
\big\|[A\cdot\nabla,\mathcal K_\delta]h\big\|_{L^{2}}
\leq\|(A\cdot\nabla)K_{\delta}*h-A\cdot\nabla h\|_{L^2}+\|A\cdot\nabla h-K_{\delta}*(A\cdot\nabla h)\|_{L^{2}}\to0
$$
as $\delta\to0$.
Let  $W=-i(\nabla\cdot A)-|A|^{2}\in L^\infty$ be a multiplication operator, then we have
$$
\big\|[W,\mathcal K_\delta]h\big\|_{L^{2}}
\leq\|W(K_{\delta}*h)-Wh\|_{L^2}+\|Wh-K_{\delta}*(Wh)\|_{L^{2}}  \to0
$$
as $\delta\to0$.
Thus, according to the fact
\begin{equation}
	(i\partial_{t}+\Delta_{A})h_{\delta}=K_{\delta}*(i\partial_{t}+\Delta_{A})h+[\Delta_{A},\mathcal K_\delta]h.
\end{equation}
Let $\phi=|\frac{x_{1}-x_{0,1}}{R}+\varphi(t)|^{2}$, then we deduce
\begin{align}
	\frac{\sigma^{\frac{3}{2}}}{R^{2}}\|e^{\sigma\phi}h_{\delta}\|^{2}_{L^{2}(\mathbb{R}^{d+1})}\leq \|e^{\sigma\phi}K_{\delta}*\big((i\partial_{t}+\Delta_{A})h\big)\|_{L^{2}(\mathbb{R}^{d+1})}
	+\|e^{\sigma\phi}[\Delta_{A},\mathcal K_\delta]h\|_{L^{2}}.
\end{align}
Since $e^{\sigma\phi}\leq C_{\sigma,R}$ on $\operatorname{supp}h_{\delta}$, the commutator estimates and the dominated convergence theorem imply the desired Carleman estimate for $h$.  
\end{proof}
\section{Proof of Theorem \ref{thm-P}}

Our proof will follow the strategy in \cite{KPV-14}. To start with, we recall a linear exponential decay estimate for covaraint Schr\"odinger flow established in \cite{Huang-Wang2025}.

\begin{lemma}\label{Huang}
Suppose that electric potential $\mathbb{V}:\mathbb{R}^{d}\times[0,1]\to \mathbb{C}$ satisfies
\begin{equation}
	\|\mathbb{V}\|_{L^{1}([0,1],L^{\infty}(\mathbb{R}^{d}))}\leq\varepsilon_{0}
\end{equation}
and magnetic potential $\mathbb{A} :\mathbb{R}^{d}\times[0,1]\to\mathbb{R}^{d}$ with $\textbf{e}_{1}\cdot \mathbb{A}(x,t)=0$ satisfies
\begin{equation}
	\||\mathbb A|^{2}\|_{L^{1}([0,1],L^{\infty}(\mathbb{R}^{d}))}\leq\varepsilon_{0}^{\prime},\,\,\|\partial_{j}\Bbb A_{j}\|_{L^{1}([0,1],L^{\infty}(\mathbb{R}^{d}))}\leq\varepsilon_{0}^{\prime\prime},\,\,j=2,\cdots,d.
\end{equation}
Let $u\in C([0,1],L^{2}(\mathbb{R}^{d}))$ be a solution to 
\begin{equation}
	\begin{cases}
		i\partial_{t}u+\Delta_{\Bbb A}u=\mathbb{V}(x,t)u+\mathbb{F}(x,t),\\ u(x,0)=u_{0},
	\end{cases}
\end{equation}
in which $\mathbb{F}(x,t)\in  L^{1}([0,1],L^{2}(\mathbb{R}^{d}))$. Assume for some $\vec{v}\in\mathbb{R}^{d}$ such that
\begin{equation*}
	u(x,0), u(x,1)\in L^{2}(e^{2\vec{v}\cdot x}\dd x), \,\, \mathbb{F}\in L^{1}([0,1],L^{2}(e^{2\vec{v}\cdot x}\dd x)),
\end{equation*}
then there exists $C>0$ independent of $\vec{v}$ such that 
\begin{equation}
	\sup_{t\in[0,1]}\|e^{\vec{v}\cdot x}u(t,\cdot)\|_{L^{2}(\mathbb{R}^{d})}\leq C\Big(\|e^{\vec{v}\cdot x}u(0,\cdot)\|_{L^{2}(\mathbb{R}^{d})}+\|e^{\vec{v}\cdot x}u(1,\cdot)\|_{L^{2}(\mathbb{R}^{d})}+\int_{0}^{1}\|e^{\vec{v}\cdot x}\mathbb{F}(t,\cdot)\|_{L^{2}(\mathbb{R}^{d})}\,\dd t\Big).
\end{equation}
\end{lemma}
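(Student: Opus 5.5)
The plan is to adapt the energy method of Kenig--Ponce--Vega (and its magnetic version in Huang--Wang) to the conjugated solution $f=e^{\vec v\cdot x}u$. We may assume $\vec v=\lambda\mathbf e_1$, or handle general $\vec v$ directly, since only the first component of $\mathbb A$ matters. We first treat the case where $u$ has some a priori regularity and decay: $e^{\vec v\cdot x}u\in C([0,1],L^2)$, with $f\in L^2([0,1],H^1)$. This is justified afterwards by replacing the weight $e^{\vec v\cdot x}$ with truncated weights $e^{\vec v\cdot x}$ capped at large $|x|$ (e.g.\ $e^{\lambda\phi_N(x_1)}$ with $\phi_N$ smooth, convex-type, bounded derivatives, $\phi_N\uparrow x_1$). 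We then derive the estimate with constants uniform in $N$ and pass $N\to\infty$ by monotone convergence.

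Step 1 (conjugation). The function $f=e^{\vec v\cdot x}u$ solves $\partial_t f=\mathcal S f+\mathcal A f+i e^{\vec v\cdot x}(\mathbb V u+\mathbb F)$, where, with $\psi=\vec v\cdot x$ in the formulas of the proof of Theorem \ref{classical Carleman}, we have $\mathcal S=-2i\,\vec v\cdot\nabla_{\mathbb A}$ (symmetric part, with $\Delta\psi=0$ and $\psi_t=0$) and $\mathcal A=i(\Delta_{\mathbb A}+|\vec v|^2)$ (antisymmetric part). Because $\mathcal S$ is symmetric and $\mathcal A$ antisymmetric, the quantity $H(t)=\|f(t)\|_{L^2}^2$ satisfies $\partial_t H=2\operatorname{Re}(\mathcal S f,f)+2\operatorname{Re}(i e^{\psi}(\mathbb V u+\mathbb F),f)$. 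This is not sign-definite, so we use the standard log-convexity formula instead:
\[
\partial_t^2\log H\;\ge\;\frac{2\big(([\mathcal S,\mathcal A]+\mathcal S_t)f,f\big)}{H}-(\text{error from }\mathbb V,\mathbb F).
\]
The alternative, which is cleaner and is what I would actually use, is the abstract lemma of EKPV (Lemma 2 in EKPV-JLMS). That lemma says that if $[\mathcal S,\mathcal A]\ge 0$ up to a controllable error, then
\[
\|f(t)\|\le e^{C(\|\mathbb V\|_{L^1L^\infty})}\big(\|f(0)\|+\|f(1)\|+\textstyle\int_0^1\|e^{\psi}\mathbb F\|\big).
\]

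Step 2 (commutator). We compute $[\mathcal S,\mathcal A]=4[\vec v\cdot\nabla_{\mathbb A},\Delta_{\mathbb A}]$, which equals $-8i\,\vec v^\top\mathbb B\cdot\nabla_{\mathbb A}+(\text{divergence of }\mathbb B\text{ terms})$. Since the gauge gives $\mathbf e_1\cdot\mathbb A=0$, taking $\vec v\parallel\mathbf e_1$ makes the principal part $\vec v\cdot\nabla_{\mathbb A}=\lambda\partial_1$. The leading first-order commutator then reduces to terms involving $\partial_1\mathbb A_j$, $j\ge2$. We bound these in the quadratic form by $|\vec v|\,\|\partial_1\mathbb A\|_\infty$-type quantities times $\|f\|\|\nabla_{\mathbb A}f\|$. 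To avoid a dependence on $\vec v$, it is better to expand $\Delta_{\mathbb A}=\Delta-2i\mathbb A\cdot\nabla-i\nabla\cdot\mathbb A-|\mathbb A|^2$ and treat $-2i\mathbb A\cdot\nabla f-i(\nabla\cdot\mathbb A)f-|\mathbb A|^2f$ as a perturbation of the free flow. Concretely, we write the equation as $i\partial_t u+\Delta u=\mathbb V u+\mathbb F+2i\mathbb A\cdot\nabla u+i(\nabla\cdot \mathbb A)u+|\mathbb A|^2u$. Here $\mathbb A\cdot\nabla=\sum_{j\ge2}\mathbb A_j\partial_j$ commutes with the weight $e^{\lambda x_1}$ because $\mathbb A_1=0$. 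We then rewrite $\sum_{j\geq2}\mathbb A_j\partial_j u$ using self-adjointness, so that only $|\mathbb A|^2$ and $\partial_j\mathbb A_j$ for $j\ge2$ appear. These are exactly the quantities assumed small in $L^1_tL^\infty_x$.

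Step 3 (closing). For the free part, the weighted estimate with constant independent of $\vec v$ is the KPV/EKPV lemma. The first-order term $\mathbb A\cdot\nabla u$ is the main obstacle, since it is not bounded on $L^2$. I would handle it by an energy identity for $\|e^{\lambda x_1}u\|^2$: the contribution $\operatorname{Re}\int 2i\mathbb A\cdot\nabla f\,\bar f=-\int(\nabla\cdot\mathbb A)|f|^2$ after integrating by parts (valid since $\mathbb A_1=0$). This is bounded by $\varepsilon_0''\|f\|^2$. The $|\mathbb A|^2$ term is controlled by $\varepsilon_0'$. A Gronwall-type argument combined with the two-endpoint log-convexity then absorbs these terms for small $\varepsilon_0,\varepsilon_0',\varepsilon_0''$, giving the bound with $C$ independent of $\vec v$.
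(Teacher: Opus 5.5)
The paper does not prove this lemma; it quotes it from Huang--Wang. Judged on its own, your proposal has a genuine gap, and it sits exactly where you locate the main obstacle: the first-order term $2i\mathbb A\cdot\nabla u$.

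\textbf{Why Step 3 does not close.} The free weighted estimate of Escauriaza--Kenig--Ponce--Vega is a Duhamel bound obtained by Fourier splitting. For $f=e^{\lambda x_1}u$ one solves forward from $t=0$ on $\{\lambda\xi_1\le0\}$ and backward from $t=1$ on $\{\lambda\xi_1>0\}$. It accepts a right-hand side only in $L^1_tL^2_x$. To close, you would need $\int_0^1\|e^{\lambda x_1}\mathbb A\cdot\nabla u\|_{L^2}\,dt\le\epsilon\sup_t\|e^{\lambda x_1}u(t)\|_{L^2}$. Nothing in the hypotheses gives this: smallness of $|\mathbb A|^2$ and $\partial_j\mathbb A_j$ controls only the zeroth-order terms.

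\textbf{Why the energy identity does not help.} An energy identity cannot be fed into that Duhamel bound. On its own it fails for the reason you noted in Step 1: $\frac{d}{dt}\|f\|^2$ contains $2\operatorname{Re}(\mathcal Sf,f)=4\lambda\operatorname{Im}\int\partial_1 f\,\bar f$. This term is indefinite, not bounded by $\|f\|^2$, and of size $\lambda$, so no Gronwall argument is uniform in $\vec v$. Your integration by parts is also off: with the factor $i$ one gets $-2\operatorname{Im}\int\mathbb A\cdot\nabla f\,\bar f$, not a divergence term. In fact the whole magnetic part of $i\Delta_{\mathbb A}$ is skew-adjoint and drops out of $\frac{d}{dt}\|f\|^2$, so there is nothing there to absorb; the obstruction is the drift term.

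\textbf{Why log-convexity does not help.} The abstract lemma of Escauriaza--Kenig--Ponce--Vega gives a multiplicative bound $H(t)\le e^{N(M_1+M_2+M_1^2+M_2^2)}H(0)^{1-t}H(1)^t$. It assumes a pointwise bound on the potential and $\sup_t\|G\|/\|f\|<\infty$, so it is not the additive estimate you quote. It also needs $\mathcal S_t+[\mathcal S,\mathcal A]$ bounded below. Here $[\mathcal S,\mathcal A]=2\lambda[\partial_1,\Delta_{\mathbb A}]=-2i\lambda\sum_j(\partial_1\mathbb A_jD_j+D_j\,\partial_1\mathbb A_j)$, a first-order operator of size $\lambda$ built from $\partial_1\mathbb A_j$, which the hypotheses do not control.

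\textbf{The reduction to $\vec v=\lambda\mathbf e_1$ is not free.} For other $\vec v$, conjugation produces the real term $-2(\vec v\cdot\mathbb A)f$ in $\partial_t f$, whose effect grows with $|\vec v|$. Only $\vec v\parallel\mathbf e_1$ is used in the paper.

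\textbf{The missing idea.} You need to keep the magnetic operator inside a unitary evolution rather than perturbing off $\Delta$. In the setting where the paper applies the lemma, this is available. The condition $\mathbf e_1^{\top}B=0$ and the Bianchi identity $\partial_1B_{jk}+\partial_jB_{k1}+\partial_kB_{1j}=0$ give $\partial_1B=0$. Hence the Cronstr\"om-gauge potential $\tilde A=-\int_0^1\Psi(sx)\,ds$ has $\tilde A_1=0$ and is independent of $x_1$, and both properties survive the rescaling and the Appell transform.

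Then $\Delta_{\mathbb A}=\partial_1^2+\Delta'_{\mathbb A}$, with $\Delta'_{\mathbb A}$ acting only in $x'$ and commuting with $\partial_1$. The Escauriaza--Kenig--Ponce--Vega argument now runs with a partial Fourier transform in $x_1$ only. For fixed $\xi_1$, $\hat f(\xi_1,\cdot,t)$ is carried by the unitary propagator of $\partial_t-i\Delta'_{\mathbb A}$ times the scalar factor $e^{(2\lambda\xi_1-i(\xi_1^2-\lambda^2))t}$. Solving forward when $\lambda\xi_1\le0$ and backward when $\lambda\xi_1>0$ gives $\sup_t\|f(t)\|\le\|f(0)\|+\|f(1)\|+\int_0^1\|\mathbb Vf+e^{\lambda x_1}\mathbb F\|\,dt$. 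The $\mathbb V$ term is absorbed using $\|\mathbb V\|_{L^1L^\infty}\le\varepsilon_0$, and no smallness of $\mathbb A$ is needed.

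Under only the stated hypotheses ($\mathbb A_1=0$ plus smallness of $|\mathbb A|^2$ and $\partial_j\mathbb A_j$), your proposal has no mechanism that controls the first-order magnetic terms uniformly in $\lambda$.
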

With Carleman estimate in hand, we are now in position to prove Theorem \ref{thm-P}.
\begin{proof}[Proof of Theorem \ref{thm-P}]The proof will be divided into six steps. \\
\textbf{Step 1: Linear exponential decay.} In this step, we establish that
\begin{equation}\label{one direction integral}
	\sup_{t\in[0,1]}\int_{\mathbb{R}^{d}}e^{2a_{1}x_{1}}|u(t,x)|^{2}\,\dd x\leq K_{3}.
\end{equation}
Using assumption \eqref{bound-V}, we choose  $\rho>0$  sufficiently large so that
\begin{equation}
	\|V\chi_{|x|\geq\rho}\|_{L^{1}([0,1];L^{\infty}(\mathbb{R}^{d}))}\leq \varepsilon_{0}.
\end{equation}
From  \eqref{time interval bound}, \eqref{one directional exponential decay} and \eqref{one directional support}, we have
\begin{equation}
	\int_{\mathbb{R}^{d}}e^{2a_{1}x_{1}}|u(x,0)|^{2}\,\dd x\leq K_{2},\mbox{ and }\int_{\mathbb{R}^{d}}e^{2a_{1}x_{1}}|u(x,1)|^{2}\dd x \leq e^{2a_{1}a_{2}}K_{1}.
\end{equation}

Let $\varphi$ be the gauge function defined in Lemma \ref{gauge} and set $w=e^{i\varphi}u$. Then $w$ satisfies 
$
i\partial_tw+\Delta_{\tilde A}w-V(t,x)w=0,
$
where $u$ solves \eqref{eq1} and  $\tilde A=A-\nabla\varphi$. By the transversal condition $\mathbf{e}_1^{\top} B=0$ and Lemma \ref{gauge}, we have $e_1\cdot \tilde A=0$. 
Next, we set  $\mathbb{F}(x,t)=-\chi_{|x|\leq\rho}V(x,t)w(x,t)$ and $\Bbb A=\tilde A$. Since
\begin{equation}
	\int_{0}^{1}\|e^{a_{1}x_{1}}\chi_{|x|\leq\rho}Vw\|_{L^{2}(\mathbb{R}^{d})}\,\dd t\leq e^{a_{1}\rho}M_{0}K_{1},
\end{equation}
then  \eqref{one direction integral} is an application of Lemma \ref{Huang} with $\vec{v}=a_1\mathbf{e}_1$ and the fact $\|e^{a_1x_1}w(t,\cdot)\|_{L_x^2}=\|e^{a_1x_1}u(t,\cdot)\|_{L^2_x}$.

\textbf{Step 2: Reduction to a strip.}
In this step, we can reduce the uniqueness on whole space to a narrow strip in $x_1$ direction. 
Let  $\delta=\frac{\varepsilon_{0}}{M_{0}+1}$ be a parameter with $M_{0}:=\|V\|_{L_{t,x}^\infty}$ and $\varepsilon_{0}:=\|V\chi_{|x|\geq\rho}\|_{L_{t}^1L_x^\infty}$. Note that $\delta<1$ and 
\begin{equation}
	\int_{1-\delta}^{1}\|V(\cdot,t)\|_{L_{x}^{\infty}}\,\dd t\leq\delta\|V\|_{L^{\infty}_{t,x}}\leq M_{0}\frac{\varepsilon_0}{M_{0}+1}\leq \varepsilon_{0}.
\end{equation}
By rescaling,  we denote $v(x,t)=u(\delta^{\frac{1}{2}}x,\delta t+1-\delta).$ 
To preserve the form of  the Schr\"odinger equation under the rescaling transform, we introduce the rescaled potentials,
\begin{equation}
	V_{\delta}(x,t)=\delta V(\delta^{\frac{1}{2}}x,\delta t+1-\delta),
	\,\,
	A_{\delta}(x,t)=\delta^{\frac{1}{2}} A(\delta^{\frac{1}{2}}x,\delta t+1-\delta).
\end{equation}
Then, $v$ satisfies
\begin{equation}
	\label{scale-v}\partial_{t}v=i(\Delta_{A_{\delta}}+V_{\delta})v
\end{equation}
and has the property
\begin{equation}\label{support on v1}
	\operatorname{supp}(v(\cdot,1))\subset\{x_{1}\leq m\delta^{-\frac12}\},
\end{equation}
if $\operatorname{supp}u\subset\{x:x_1\leq m\}$.
In addition, we have 
\begin{equation}
	\label{bound-V-delta}\|V_{\delta}\|_{L^{\infty}}\leq\delta M_{0}\leq\varepsilon_{0},\,\,\int_{0}^{1}\|V_{\delta}\|_{L^{\infty}}\,\dd t\leq\varepsilon_{0},
\end{equation}
and by a change of variables, we obtain
\begin{equation}\label{integral of v}
	\int_{\mathbb{R}^{d}}|v(x,t)|^{2}\,\dd x=\frac{1}{\delta^{\frac{d}{2}}}\int_{\mathbb{R}^{d}}|u(y,\delta t+1-\delta)|^{2}\,\dd y\leq K_1\delta^{-\frac d2}.
\end{equation}

Thus, from \eqref{one direction integral}, we get
$$\int_{\mathbb{R}^{d}}e^{2a_{1}x_{1}\delta^{\frac{1}{2}}}|v(x,0)|^{2}\,\dd x\leq K_3\delta^{-\frac{d}{2}}.
$$
In the sequel, we will show that 
\begin{equation}\label{reduction to thin strip}
	\int_{\mathbb{R}^{d-1}}\int_{\frac{m}{2\delta^{1/2}}\leq x_{1}\leq\frac{m}{\delta^{1/2}}}|v(x,1)|^{2}\,\dd x_{1}\,\dd x^\prime=\int_{\mathbb{R}^{d-1}}\int_{\frac{m}{2}\leq x_{1}\leq m}|u(x,1)|^{2}\,\dd x_{1}\dd x^\prime=0,
\end{equation}
 under the assumption \eqref{one directional support} with $a_2=m$ and \eqref{one directional exponential decay}, \eqref{time interval bound} and \eqref{bound-V}. In other word, we claim that under the condition \eqref{one directional exponential decay} and $\operatorname{supp}u(x,1)\subset \{x\in\R^d:x_1\leq m\}$, it holds $u(x,1)\equiv0$ in $x_1\in[\frac{m}{2},m]$. Denote $f(x_1,x^\prime)=u(x_1,x^\prime,1)$ and $g(x_1)=f(x_1-a_2+m,x^\prime)$. We know that $\operatorname{supp} g(x_1)\subset\{x_1\leq m\}$. As a consequence, we have $g(x_1)\equiv0$ for $x_1\in[\frac{m}{2},m]$, and hence $f(x_1,x^\prime)\equiv0$ for $x_1\in[a_2-\frac{m}2,a_2]$. Then replacing $a_2$ by $a_2-\frac{m}{2}$, we can extend the uniqueness region. Repeating this procedure step by step, we obtain $u(x,1)\equiv0$ in  $x_1\leq a_2$. The uniqueness  in region $\{x_1>a_2\}$ can be guaranteed by assumption \ref{one directional support}.  By using the Duhamel formula, we can show the uniqueness for all $t\in[0,1]$.

In the following steps, we will focus on proving the claim \eqref{reduction to thin strip}.

\textbf{Step 3: Appell transformation.} In this step, we introduce an Appell transformation to adjust the exponential decay rate. From Lemma \ref{Appell},  for arbitrary $\alpha,\beta>0$, let $v$ be a solution to \eqref{scale-v}, then the Appell transformation of $v$ solves
% \begin{align*}
	% 	\tilde{v}(x,t)=\Big(\frac{\sqrt{\alpha\beta}}{\alpha(1-t)+\beta t}\Big)^\frac{d}{2}v\Big(\frac{\sqrt{\alpha\beta}x}{\alpha(1-t)+\beta t},\frac{\beta t}{\alpha(1-t)+\beta t}\Big)e^{\frac{(\alpha-\beta)|x|^2}{4i(\alpha(1-t)+\beta t)}}
	% \end{align*}
% is a solution to 
\begin{align}
	\label{tilde-v}\partial_t\tilde{v}=i\Big(\Delta_{\tilde{A}_{\delta}}\tilde v+\frac{(\alpha-\beta)\tilde{A}_{\delta}\cdot x}{\alpha(1-t)+\beta t}\tilde{v}+\tilde{V_\delta}(t,x)\tilde{v}\Big),
\end{align} where  
$\tilde{A}_{\delta},\tilde{V_\delta}$ are defined in Lemma \ref{Appell} and we denote by $\tilde{D}_{1}$ the first component of $\nabla_{\tilde{A}_{\delta}}$.

% For a fixed $\lambda>0$, we take $\alpha(\lambda,\delta)>0$ and $\beta(\lambda,\delta)>0$. We recall that it has been proven that 
% \begin{align*}
	%     \big\|e^{a_1x_1\delta^\frac12}v(x,0)\big\|_{L^2_x}^2\leq \frac{K_3}{\delta^{d/2}}.
	% \end{align*}
From \eqref{support on v1} and \eqref{integral of v}, we obtain
\begin{align}\label{bound-v-1-1}
	\big\|e^{\lambda x_1}v(x,1)\big\|_{L_x^2}^2\leq e^{2\lambda m\delta^{-\frac12}}K_1\delta^{-\frac d2}.
\end{align}
For a given $\lambda>0$,
we choose a triple $(\gamma,\alpha,\beta)$ such that
\begin{equation}
	\label{alphabeta2}
	\gamma=(\lambda\delta^{1/2}a_1)^{1/2},\,\,\,\,\beta=\lambda,\,\,\,\alpha=\delta^{1/2}a_1,
\end{equation} which implies  $
\gamma\big(\frac{\alpha}{\beta}\big)^{1/2}=\delta^{1/2}a_1$ and $\gamma\big(\frac{\beta}{\alpha}\big)^{1/2}=\lambda.
$ Furthermore,  we have 
\[
\| e^{\gamma x_1}\widetilde v(x,0)\|_{L^2_x}=\| e^{\gamma(\alpha/\beta)^{1/2} x_1} v(x,0)\|_{L^2_x}=\| e^{a_1x_1\delta^{1/2}}v(\cdot,0)\|_{L^2_x}\leq K_3^\frac{1}{2}\delta^{-\frac d4}.
\]
Similarly, it holds
$
\| e^{\gamma x_1}\widetilde v(x,1)\|_{L^2_x}\leq {e^{\lambda m/\delta^{1/2}} }K_1^{1/2}{\delta^{-\frac d4}}
$

Using the change of variable,
$
\tau=\frac{\beta t}{\alpha(1-t)+\beta t}
$
and using the boundedness of $V_\delta$, i.e. \eqref{bound-V-delta}, one has
\begin{align*}
	\int_0^1\|\,\tilde {V_\delta}(\cdot,t)\|_{L_x^\infty}dt
	&=\int_0^1\|V_{\delta}(\cdot,\tau)\|_{L_x^\infty}\,d \tau \leq \varepsilon_0.
\end{align*}
Combining with the linear exponential decay estimate with $\mathbb{F}=0$, we deduce
\begin{align}
	\sup_{0\leq t \leq 1}\| e^{\gamma x_1} \widetilde v(\cdot,t)\|_{L^2_x} &\leq {C}_1(d) \Big(K_3^\frac12\delta^{-\frac d4}+K_1^\frac12 e^{\lambda m\delta^{-\frac12}}\delta^{-\frac d4}\Big)\\
	&\leq C(\delta,a_1,K_1,K_3)e^{\lambda m\delta^{-\frac12}}.\label{upper bound gamma x1}
\end{align}
By rescaling and \eqref{integral of v},  we have
\begin{equation}
	\label{est-tilde-v}
	\sup_{0\leq t\leq 1}\| \widetilde v(\cdot,t)\|_{L^2}\leq K_1^\frac{1}{2}\delta^{-\frac{d}{4}}.
\end{equation}
Next, we denote a smooth and convex function $\kappa_0(x_1)\geq0$ by
\begin{align}
	\kappa_0(x_1)=\begin{cases}
		0,&x_1\leq0,\\
		x_1-\frac14,&x_1\geq\frac12.
	\end{cases}
\end{align}
Define 
$
\kappa(x_1)=\langle \kappa_0(x_1)\rangle
$,
the direct computation yields
\begin{align*}
	\kappa^\prime(x_1)=\frac{\kappa_0(x_1)\kappa_0^\prime(x_1)}{\langle \kappa_0(x_1)\rangle},\,\,\kappa^{\prime\prime}(x_1)=\frac{\kappa_0^\prime(x_1)^2+\kappa_0(x_1)\kappa_0^{\prime\prime}(x_1)\langle\kappa_0(x_{1})\rangle^2}{\langle \kappa_0(x_1)\rangle^3}.
\end{align*}
In the region $x_1\geq\frac12$, we have the lower bound $
\kappa^{\prime\prime}(x_1)\geq \langle x_1\rangle^{-3}.$

Now, we provide an upper bound of weighted $L^2$ norm of $\tilde v$.
\begin{proposition}
	Let  $\tilde{v}$ be a solution to \eqref{tilde-v}. Then we have
	\begin{align}\label{upper bound for derivative}
		&\sup_{t\in[0,1]}\|e^{\gamma\kappa(x_1)}\tilde{v}\|_{L^{2}}^{2}+4\gamma\int_{0}^{1}\int_{\mathbb{R}^{d-1}}\int_{x_{1}\geq\frac{1}{2}}t(1-t)\frac{1}{\langle x_{1}\rangle^{3}}\,e^{2\gamma\kappa(x_{1})}|\tilde D_{1}\tilde{v}|^{2}\,\dd x_{1}\dd x^\prime\dd t\\
		\leq&c_{\delta,M_{0},a_{1},d}\,\lambda\,e^{2\lambda m\delta^{-\frac12}}.
	\end{align}
\end{proposition}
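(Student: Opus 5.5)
This estimate is the convexity/log-convexity argument of Kenig--Ponce--Vega (the analogue of their Lemma for $e^{\gamma\kappa(x_1)}$), adapted to the covariant operator. I would set $f=e^{\gamma\kappa(x_1)}\tilde v$ and $H(t)=\|f(t)\|_{L^2}^2$. Since $\kappa$ depends only on $x_1$ and $\mathbf e_1\cdot\tilde A_\delta=0$ (transported through the gauge of Lemma \ref{gauge} and the Appell transform of Lemma \ref{Appell}), conjugating \eqref{tilde-v} by $e^{\gamma\kappa}$ gives
\begin{align*}
\partial_t f=\mathcal S f+\mathcal A f+i\tilde V_\delta f+i\tfrac{(\alpha-\beta)\tilde A_\delta\cdot x}{\alpha(1-t)+\beta t}f .
\end{align*}
Here $\mathcal S=-i\gamma(\kappa''+2\kappa'\tilde D_1)$ is symmetric and $\mathcal A=i(\Delta_{\tilde A_\delta}+\gamma^2\kappa'^2)$ is antisymmetric. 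The potential term involving $\tilde A_\delta\cdot x$ is real, hence skew after multiplying by $i$, so it joins $\mathcal A$. The electric part $\tilde V_\delta$ has small $L^1_tL^\infty_x$ norm by \eqref{bound-V-delta} and is treated as a perturbation.

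Next I would apply the abstract lemma from EKPV/KPV: if $\partial_t f=\mathcal Sf+\mathcal Af+G$, then
\begin{align*}
\partial_t^2H\ge 2\partial_t\mathrm{Re}(\partial_tf-\mathcal Sf-\mathcal Af,f)+2(\mathcal S_tf+[\mathcal S,\mathcal A]f,f)-\|G\|^2.
\end{align*}
Multiplying by $t(1-t)$ and integrating over $[0,1]$ then gives
\begin{align*}
\sup_t H+\int_0^1 t(1-t)(\mathcal S_tf+[\mathcal S,\mathcal A]f,f)\,\dd t\lesssim H(0)+H(1)+\sup_t H\cdot\|\tilde V_\delta\|_{L^1_tL^\infty_x}+\dots
\end{align*}
For the commutator I would compute as in Theorem \ref{classical Carleman}, with Hessian weight $\gamma\kappa''$ in the $x_1$ direction only:
\begin{align*}
([\mathcal S,\mathcal A]f,f)=4\gamma\int\kappa''|\tilde D_1f|^2+4\gamma^3\int\kappa''\kappa'^2|f|^2-\gamma\int\kappa''''|f|^2+(\text{magnetic terms}).
\end{align*}
The magnetic terms involve $\mathbf e_1^\top B$ and vanish by Assumption \ref{assumption on A and B}. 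A further term comes from the commutator of $\mathcal S$ with the potential $\tilde A_\delta\cdot x$; it is $\gamma\kappa'\partial_1(\tilde A_\delta\cdot x)$ and vanishes because $\tilde A_\delta$ is independent of $x_1$ in the relevant components and $\mathbf e_1\cdot\tilde A_\delta=0$. The term $\mathcal S_t$ is zero because $\kappa$ is time independent. Finally $\kappa''''$ is bounded, so $\gamma\int|\kappa''''||f|^2\lesssim\gamma\sup H$.

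With $\kappa''\ge\langle x_1\rangle^{-3}$ on $\{x_1\ge\frac12\}$ and $\kappa''\ge0$ elsewhere, and using $|e^{\gamma\kappa}\tilde D_1\tilde v|\le |\tilde D_1 f|+\gamma|\kappa'||f|$, I get the derivative term in \eqref{upper bound for derivative} up to an error $\gamma^3\int\kappa''\kappa'^2|f|^2$. That error is absorbed by the positive $\gamma^3$ term.

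It remains to bound the right-hand side. Since $\kappa(x_1)\le |x_1|+1$ and $\kappa=1$ for $x_1\le0$, we have $e^{\gamma\kappa}\le e^{\gamma}(1+e^{\gamma x_1})$. Combining this with \eqref{upper bound gamma x1} and \eqref{est-tilde-v} gives $\sup_tH\lesssim e^{2\gamma}C e^{2\lambda m\delta^{-1/2}}$. The factor $e^{2\gamma}$ is $e^{2(\lambda\delta^{1/2}a_1)^{1/2}}$, subexponential in $\lambda$, and I would need it absorbed into $c\,\lambda$ or into the exponent. More likely I would simply bound the right side by $(1+\gamma)\sup H$, using \eqref{upper bound gamma x1} directly together with $\gamma\lesssim\lambda$. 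The $\gamma\int|\kappa''''||f|^2$ contribution is what produces the factor $\lambda$.

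The main obstacle is the commutator $[\mathcal S,\mathcal A]$ in the presence of the Appell-modified magnetic term $\frac{(\alpha-\beta)\tilde A_\delta\cdot x}{\alpha(1-t)+\beta t}$. This term has large coefficient $\sim\lambda$, so I must verify that every contribution it makes involves $\partial_1$ of quantities that vanish under $\mathbf e_1^\top B=0$ and the Cronstr\"om gauge, rather than merely being bounded. I would check this first, using the corollary $x\cdot\tilde A=0$.
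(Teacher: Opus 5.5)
Your route is the paper's: you conjugate by $e^{\gamma\kappa(x_1)}$ and apply the convexity identity weighted by $t(1-t)$. You compute $[\mathcal S,\mathcal A]$ (your formula is correct) and discard the magnetic terms via $\mathbf e_1^\top B=0$. You then convert $\tilde D_1 f$ into $e^{\gamma\kappa}\tilde D_1\tilde v$ using $\kappa''\ge\langle x_1\rangle^{-3}$ on $\{x_1\ge\frac12\}$ together with the $\gamma^3$ term.

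\textbf{The gap.} The step that does not close as written is the bound for $\sup_t H=\sup_t\|e^{\gamma\kappa}\tilde v\|_{L^2}^2$. This is both the first term of the claimed inequality and the quantity controlling the right-hand side of the convexity estimate. Your comparison $e^{\gamma\kappa}\le e^{\gamma}(1+e^{\gamma x_1})$ costs a factor $e^{2\gamma}=e^{2(\lambda\delta^{1/2}a_1)^{1/2}}$, which is not $O(\lambda)$. Your fallback, bounding the right side by $(1+\gamma)\sup_tH$ ``using \eqref{upper bound gamma x1} directly'', does not help. $\sup_tH$ is exactly what must be estimated, and \eqref{upper bound gamma x1} controls the weight $e^{\gamma x_1}$, not $e^{\gamma\kappa}$. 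Since $\kappa\ge1$, one has $H(t)\ge e^{2\gamma}\|\tilde v(t)\|_{L^2}^2$, so a prefactor polynomial in $\lambda$ is only attainable for $\lambda$ large relative to $m$.

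\textbf{The fix.} The missing observation is that $\kappa(x_1)=\langle x_1-\frac14\rangle\le x_1$ for $x_1\ge\frac{17}{8}$, while $\kappa$ is nondecreasing with $\kappa(\frac{17}{8})=\frac{17}{8}$. Splitting at $x_1=\frac{17}{8}$ and using \eqref{upper bound gamma x1} and \eqref{est-tilde-v},
\[
\sup_t H\le\sup_t\|e^{\gamma x_1}\tilde v\|_{L^2}^2+e^{\frac{17}{4}\gamma}\sup_t\|\tilde v\|_{L^2}^2\le Ce^{2\lambda m\delta^{-1/2}}+K_1\delta^{-\frac d2}e^{\frac{17}{4}\gamma}.
\]
Since $\gamma=(\lambda\delta^{1/2}a_1)^{1/2}=o(\lambda)$, the last term is at most $e^{2\lambda m\delta^{-1/2}}$ once $\lambda\ge\lambda_0(m,\delta,a_1)$. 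That is the regime of Step 6 ($m$ fixed, $\lambda\to\infty$). Your commutator bound by $(1+\gamma)\sup_tH$ then gives exactly $c\lambda e^{2\lambda m\delta^{-1/2}}$. Absorbing $e^{2\gamma}$ into the exponent instead would suffice for Step 6, which has slack, but it proves a weaker inequality than the one stated.

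Relatedly, the convexity identity does not put $\sup_tH$ on the left as in your displayed inequality. It only bounds the weighted commutator integral by $H(0)+H(1)+O(\sup_tH)$, so $\sup_tH$ must be bounded separately, as above. The paper passes over this comparison silently too.

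\textbf{The points you flagged.} On these you are more careful than the paper, which simply drops the Appell term. In the Cronstr\"om gauge $x\cdot\tilde A=0$, and this is preserved by the rescaling and by the Appell transform. Hence $\frac{(\alpha-\beta)\tilde A_\delta\cdot x}{\alpha(1-t)+\beta t}\equiv0$ and there is nothing to commute. Your alternative argument ($\tilde A_{\delta,1}=0$ and $B_{1k}=0$ force $\partial_1\tilde A_\delta=0$) is also correct. Note that it is $\tilde A_{\delta,1}=0$, not merely the time-independence of $\kappa$, that gives $\mathcal S_t=0$, since $\tilde A_\delta$ depends on $t$.

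For the electric part, the term $\|G\|^2$ in the identity requires $\int_0^1t(1-t)\|\tilde V_\delta(t)\|_{L^\infty}^2\,dt$ rather than the $L^1_tL^\infty_x$ norm. This is bounded uniformly in $\lambda$ (substitute $s=\beta t/\alpha$), so $\tilde V_\delta$ contributes only $O(\sup_tH)$.
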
 
\begin{proof}
	Let ${f}(x,t)=e^{\gamma \kappa(x_1)}\tilde{v}(x,t)$. Then $f$ satisfies the equation
	\begin{equation}
		\partial_{t}f=\mathcal{S}f+\mathcal{A}f+ie^{\gamma\kappa(x_1)}F, \,\, \mathbb{R}^{d}\times[0,1],
	\end{equation}
	where
	\begin{equation}
		\mathcal{S}=-i\gamma(\partial_{x_1}^{2}\kappa+2\partial_{x_{1}}\kappa\tilde D_{1}),
		\,
		\mathcal{A}=i(\Delta_{\tilde{A}_{\delta}}+\gamma|\partial_{x_{1}}\kappa|^{2}),
		\,
		F=\tilde{V}_\delta\tilde{v}.
	\end{equation}
	Similar to \eqref{commutator}, we obtain 
	$$
	\partial_{t}^{2}\big(f,f\big)
	\geq2\partial_{t}\operatorname{Re}(\partial_{t}f-\mathcal{A}f-\mathcal{S}f,f)+2(\mathcal{S}_{t}f+[\mathcal{S},\mathcal{A}]f,f)-\|\partial_{t}f-\mathcal{A}f-\mathcal{S}f\|_{L^{2}_{x}}^{2}.
	$$
	% Notice that 
	% \begin{align}
		% 	\int_0^1t(1-t)\frac{d^2}{dt^2}H(t)\,\dd t=H(1)+H(0)-2\int_0^1 H(t)\,\dd t\leq C\sup_{t\in[0,1]}\|f\|_{L_x^2}^2.
		% \end{align}
	Multiplying $t(1-t)$ in both side of  above estimate and integrating in time, we get
	$$
	2\int_{0}^{1}t(1-t)\int_{\mathbb{R}^{d}}(\mathcal{S}_{t}+[\mathcal{S},\mathcal{A}])f\bar{f}\,\dd x\dd t\leq c_{d}\sup_{t\in[0,1]}\|e^{\gamma\kappa(x_1)}\tilde{v}\|_{L^{2}}^{2}+c_{d}\sup_{t\in[0,1]}\|e^{\gamma\kappa(x_1)}F\|_{L^{2}}^{2}.
	$$
	
From \eqref{alphabeta2} and  $\lambda>1$ sufficiently large, we have $\alpha<\beta$, and therefore
$\|\tilde{V}_{\delta}\|\leq\frac{\beta}{\alpha}\|V_{\delta}\|_{L^{\infty}}\leq\frac{\lambda\delta^{\frac{1}{2}}M_{0}}{a_{1}}.$	
On the other hand, a direct computation yields
\begin{align*}
&2 \int_{0}^{1}t(1-t)\int_{\mathbb{R}^{d}}(\mathcal{S}_{t}+[\mathcal{S},\mathcal{A}])f\bar{f}\,\dd x\dd t\\
=&\int_{0}^{1}t(1-t)\int_{\mathbb{R}^{d}}\Big[-4\gamma\operatorname{Im}\partial_{1}\kappa\partial_{t}A_{1}|f|^{2}+8\gamma\partial_{1}^{2}\kappa |\tilde{D}_{1}f|^{2}-2\gamma\partial_{1}^{4}\kappa|f|^{2}+8\gamma^{3}\partial_{1}^{2}\kappa|\partial_{1}\kappa|^{2}|f|^{2}\\
&-8\gamma\operatorname{Im}(\partial_{1}\kappa)f\textbf{e}_{1}^{\top}B\cdot\overline{\nabla_{A}f}\Big]\,\dd x\dd t.\hspace{40ex}
\end{align*}
	Using  $\textbf{e}_{1}^{\top}B=0$, we get
	\begin{align}\label{co1}
		&\int_{0}^{1}t(1-t)\int_{\mathbb{R}^{d}}8\gamma\partial_{1}^{2}\kappa |\tilde D_{1}f|^{2}+8\gamma^{3}\partial_{1}^{2}\kappa|\partial_{1}\kappa|^{2}|f|^{2}\,\dd x\dd t
		\\\leq&\gamma\,c_{A} \sup_{t\in[0,1]}\|f\|_{L^{2}}^{2}+c_{\delta,M_{0},a_1,d}\lambda\sup_{t\in[0,1]}\|f\|^{2}_{L^{2}}+c_{d} \sup_{t\in[0,1]}\|f\|_{L^{2}}^{2}\\
		\leq &c_{\delta,M_{0},a_{1},d }\lambda\,e^{2\lambda m\delta^{-\frac12}}.
	\end{align}
	By direct calculation, we have
	\begin{align}
		\gamma|\tilde D_{1}f|^{2}=e^{2\gamma\kappa(x_{1})}\Big(\gamma|\tilde D_{1}\tilde{v}|^{2}+2\gamma^{2}\tilde D_{1}\tilde{v}\partial_{1}\kappa\tilde{v}+\gamma^{3}\big(\partial_{1}\kappa(x_1)\big)^{2}|\tilde{v}|^{2}\Big).
	\end{align}
	% By Cauchy-Schwarz's inequality, we obtain
	% \begin{equation}
		% 	2\gamma^{2}\tilde D_{1}\tilde{v}\partial_{1}\kappa\tilde{v}\leq\frac{1}{2}\gamma|\tilde D_{1}\tilde{v}|^{2}+2\gamma^{3}(\partial_{1}\kappa)^{2}|\tilde{v}|^{2}.
		% \end{equation}
	For $\lambda$ large enough and $x_{1}\geq\frac{1}{2}$, one gets 
	\begin{align}\label{lower bound}
		&\gamma\int_{0}^{1}\int_{\mathbb{R}^{d-1}}\int_{x_{1}\geq\frac{1}{2}}t(1-t)\frac{1}{\langle x_{1}\rangle^{3}}e^{2\gamma\kappa(x_{1})}|\tilde D_{1}\tilde{v}|^{2}\,\dd x_1\dd x^\prime\dd t\\ \leq& 4\gamma\int_{0}^{1}\int_{\mathbb{R}^{d-1}}\int_{x_{1}\geq\frac{1}{2}}t(1-t)\partial_{1}^{2}\kappa\,e^{2\gamma\kappa(x_{1})}|\tilde D_{1}\tilde{v}|^{2}\,\dd x_1\dd x^\prime\dd t\\\leq& c_{\delta,M_{0},a_{1},d}\lambda\,e^{2\lambda m\delta^{-\frac12}}.
	\end{align}
	Thus, combining \eqref{upper bound gamma x1} and \eqref{lower bound}, we get the desired estimate \eqref{upper bound for derivative}.
	
\end{proof}
\textbf{Step 4: Upper bounds for energy.} In this part, we introduce the following two quantities with $R$ to be chosen later
\begin{align}
	\Psi(R)=&\int_{2\leq x_{1}\leq \frac{R}{2}}\int_{\frac{3}{8}}^{\frac{5}{8}}|\tilde{v}|^{2}\,\dd t\dd x_{1}\dd x^\prime,\\
	\Theta(R)=&\int_{\mathbb{R}^{d-1}}\int_{\frac{1}{2}<x_{1}<R}\int_{\frac{1}{32}}^{\frac{31}{32}}(|\tilde{v}|^{2}+|\tilde D_{1}\tilde{v}|^{2})\,\dd t\dd x_{1}\dd x^\prime.
\end{align}
First, we provide the  bounds of $\Theta(R)$ and $\Psi(R)$ separately.
\begin{lemma}
	Let $c_{\delta,M}>0$ be a fixed constant depending only on $\delta$ and $m$ given above and $R>0$ sufficiently large, then there holds
	\begin{align}\label{theta-R}
		\Theta(R)&\leq c_{\delta,M}(1+R^3)\lambda e^{2\lambda m\delta^{-\frac12}},
		\\ \liminf_{\lambda\to\infty}\Psi(R)&\geq c_{d}\int_{\mathbb{R}^{d-1}}\int_{0<y_{1}<m\delta^{-\frac{1}{2}}}|v(y,1)|^{2}\,\dd y_{1}\dd y^\prime.\label{psi-R}
	\end{align}
\end{lemma}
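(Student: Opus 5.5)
For \eqref{theta-R}, the plan is to read the bound off the Proposition just proved. On the time window $t\in[\frac1{32},\frac{31}{32}]$ we have $t(1-t)\geq c>0$. On the spatial region $\frac12<x_1<R$ we have $\langle x_1\rangle^{-3}\gtrsim (1+R^3)^{-1}$ and $e^{2\gamma\kappa(x_1)}\geq 1$, since $\kappa\geq1$. The $|\tilde D_1\tilde v|^2$ part of $\Theta(R)$ is therefore bounded by $c(1+R^3)\gamma^{-1}$ times the derivative term in \eqref{upper bound for derivative}. This is at most $c(1+R^3)\lambda e^{2\lambda m\delta^{-1/2}}$, because $\gamma\to\infty$ as $\lambda\to\infty$, so $\gamma\geq1$ for $\lambda$ large. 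The $|\tilde v|^2$ part of $\Theta(R)$ is bounded by $\sup_t\|e^{\gamma\kappa}\tilde v\|_{L^2}^2$, again using $e^{2\gamma\kappa}\geq1$, and the Proposition controls this by the same right-hand side. Adding the two pieces gives \eqref{theta-R}, with the constant depending on $\delta,M_0,a_1,d$ (the paper writes $c_{\delta,M}$).

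For \eqref{psi-R}, I will undo the Appell transform of Lemma \ref{Appell}, with $\alpha=\delta^{1/2}a_1$ and $\beta=\lambda$. Write $\mu(t)=\alpha(1-t)+\beta t$, $s=\sqrt{\alpha\beta}/\mu(t)$ and $\tau=\beta t/\mu(t)$. Then $|\tilde v(x,t)|^2=s^d|v(sx,\tau)|^2$. Substituting $y=sx$ in space removes the factor $s^d$. The $x_1$-window $2\leq x_1\leq R/2$ becomes $2s\leq y_1\leq Rs/2$. For $t\in[\frac38,\frac58]$ and $\lambda\to\infty$, we have $\mu(t)\approx\lambda t$, so $s\approx\sqrt{\delta^{1/2}a_1/\lambda}\,t^{-1}\to0$. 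We also have $\tau=1-\alpha(1-t)/\mu(t)\to1$, and in fact $1-\tau=O(\lambda^{-1})$. The key point is to choose $R$ in terms of $\lambda$. With $R=R(\lambda)$ taken so that $R\,s\geq 2m\delta^{-1/2}$ on the whole time window, for instance $R\sim c\,m\delta^{-1/2}\sqrt{\lambda/(\delta^{1/2}a_1)}$, the $y_1$-window contains $(2s,\,m\delta^{-1/2}]$. Hence
\[
\Psi(R)\geq\int_{3/8}^{5/8}\int_{\mathbb{R}^{d-1}}\int_{2s(t)<y_1<m\delta^{-1/2}}|v(y,\tau(t))|^2\,\dd y\,\dd t .
\]
I then pass to the limit $\lambda\to\infty$. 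Since $v\in C([0,1],L^2)$ and $\tau(t)\to1$ uniformly on $[\frac38,\frac58]$, we get $v(\cdot,\tau(t))\to v(\cdot,1)$ in $L^2$ uniformly in $t$. The lower cutoff $2s(t)$ tends to $0$, so the indicator of the $y_1$-window converges pointwise to that of $(0,m\delta^{-1/2})$. Fatou's lemma, or dominated convergence combined with the strong $L^2$ convergence, then yields the bound $\frac14\int_{0<y_1<m\delta^{-1/2}}|v(y,1)|^2\,\dd y$. This is \eqref{psi-R} with $c_d=\frac14$.

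The main obstacle is bookkeeping, and it sits in \eqref{psi-R}. We must check that the choice $R\sim\lambda^{1/2}$ is compatible with how $\Theta(R)$ is used later, where it enters only polynomially, through $1+R^3$, against an exponential gain from the Carleman estimate (Corollary \ref{Carleman coro}). We must also confirm that the $\lambda$-dependent time change is harmless uniformly in $t$. Continuity of $v$ into $L^2$ near $t=1$ is exactly the property that makes this work. Finally, note that the Appell phase $e^{(\alpha-\beta)|x|^2/(4i\mu)}$ drops out because we only take moduli.
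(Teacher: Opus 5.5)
Your proposal is correct and takes essentially the paper's route. For the first bound you read off the weighted estimate \eqref{upper bound for derivative}, using $t(1-t)\gtrsim1$, $\langle x_1\rangle^{-3}\gtrsim(1+R^3)^{-1}$ and $e^{2\gamma\kappa}\ge1$. For the second you undo the Appell transform with $R\sim\lambda^{1/2}$, chosen so that the rescaled window $[2s,Rs/2]$ covers $(0,m\delta^{-1/2}]$, and then pass to the limit using $\tau\to1$ and the $L^2$-continuity of $v$ at time $1$. The only difference is that you keep $t$ as the time variable rather than substituting $\tau$, which tracks the $t$-dependence of the spatial scaling a bit more carefully than the paper's fixed interval $J$.
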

\begin{proof}
    By the Appell transformation, we have
	\begin{equation}
		\Psi(R)=\int_{\mathbb{R}^{d-1}}\int_{2\leq x_{1}\leq \frac{R}{2}}\int_{\frac{3}{8}}^{\frac{5}{8}}\Big|(\frac{\sqrt{\alpha\beta}}{\alpha(1-t)+\beta t})^{\frac{d}{2}}v(\frac{\sqrt{\alpha\beta}\,x}{\alpha(1-t)+\beta t},\frac{\beta t}{\alpha(1-t)+\beta t})\Big|^{2}\,\dd t\dd x_1\dd x^\prime.
	\end{equation}
	Introduce the change of variables
	$
	\tau(t)=\frac{\beta t}{\alpha(1-t)+\beta t}.
	$
	On the interval $t\in[\frac{3}{8},\frac{5}{8}]$, we have
	$
	\dd t\sim\frac{\beta}{\alpha}\dd \tau
	$
	and
	$\tau\Big(\frac38\Big),\tau\Big(\frac58\Big)\in\Big(\frac12,1\Big).
	%		\tau\Big(\frac{3}{8}\Big)=\frac{3\beta}{5\alpha+3\beta}\in\Big(\frac{1}{2},1\Big),\,\,\mbox{and  }\,
	%		\tau\Big(\frac{5}{8}\Big)=\frac{5\beta}{3\alpha+5\beta}\in\Big(\frac{1}{2},1\Big).
	$
	Thus, for sufficient large $\lambda$,
	\begin{equation}
		\tau\Big(\frac{5}{8}\Big)-\tau\Big(\frac{3}{8}\Big)=\frac{2\alpha\beta}{(5\alpha+3\beta)(3\alpha+5\beta)}\sim\frac{\alpha}{\beta}.
	\end{equation}
  Futhermore, it holds
	$
	\tau\Big(\frac{5}{8}\Big)>\tau\Big(\frac{3}{8}\Big)\nearrow1$ as $\lambda\to\infty$.
	Now, introducing a new change of variable $y=\sqrt\frac{\alpha}{\beta}x$ such that the integral  is contained in  following interval $
	J=\Big[2\sqrt\frac{\alpha}{\beta},\frac{R}{2}\sqrt{\frac{\alpha}{\beta}}\Big].$
	Since $t\in\big[\frac{3}{8},\frac{5}{8}\big]$,	 the determinant of Jacobian has the constant lower bound. 
	One can verify that  
	\begin{align}
		\Psi(R)\geq c_{d}\frac{\beta}{\alpha}\int_{\mathbb{R}^{d-1}}\int_{J}\int_{I(\alpha,\beta)}|v(y,s)|^{2}\,\dd s\dd y_{1}\dd y^\prime
	\end{align}
	with 
	$I(\alpha,\beta)=\big[\tau\big(\tfrac{3}{8}),\tau\big(\tfrac{5}{8})\big]$. Moreover, $|I(\alpha,\beta)|\sim\frac{\alpha}{\beta}$  for $\beta=\lambda\gg 1.
	$
	Let $R$ be a paramter such that 
	$
	R=\frac{2M\lambda^{\frac{1}{2}}m}{(\delta^{\frac{1}{2}}a_{1})^{\frac{1}{2}}c_{d}},
	$
	and  
	$
	\frac{2}{c_{d}}M\geq{\delta^{-\frac{1}{2}}}
	$.
	Since
	$
	2\sqrt{\frac{\alpha}{\beta}}\to0$ as $\lambda\to\infty,
	$	and 
	$
	\frac{R}{2}\sqrt{\frac{\alpha}{\beta}}\geq\frac{m}{\delta^{\frac{1}{2}}},
	$
	we can get \eqref{psi-R}.
    Combining \eqref{est-tilde-v} and \eqref{upper bound for derivative}, we get \eqref{theta-R},
	which completes the proof of this lemma.
\end{proof}
\textbf{Step 5: Lower bound for energy.} For a fixed small $m>0$, we define 
\begin{equation}
	b:=\int_{\mathbb{R}^{d-1}}\int_{\frac{m}{2\delta^{1/2}}<y_{1}<\frac{m}{\delta^{1/2}}}|v(y,1)|^{2}\,\dd y_{1}\dd y^\prime.
\end{equation}
Using \eqref{support on v1} and
\eqref{psi-R}, we have 
\begin{align}
	\int_{\R^{d-1}}  \int_{2\leq x_{1}\leq\frac{R}{2}}\int_{\frac{3}{8}}^{\frac{5}{8}}|\tilde{v}(x,t)|^{2}\,\dd t\dd x_1\dd x^\prime\geq\frac{b}{2}.
\end{align}
In this step, we aim to prove the lower bound of $\Theta(R)$.
\begin{proposition}
	For $\Theta(R)$ as in \eqref{theta-R}, it possesses a lower bound 
	\begin{equation}
		\Theta(R)\geq\frac{b}{8}c^{3}R^{2}e^{-8cR^{2}}.
	\end{equation}
\end{proposition}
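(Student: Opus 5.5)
We apply the Carleman estimate of Corollary \ref{Carleman coro} to a cut-off version of $\tilde v$, following \cite{KPV-14}. The weight is $\psi=\sigma\big|\frac{x_1}{R}+\varphi(t)\big|^2$ with $x_{0,1}=0$ and $\sigma=cR^2$. The smooth function $\varphi:[0,1]\to[0,3]$ is chosen so that $\varphi\equiv 3$ on $[\frac38,\frac58]$ and $\varphi\equiv0$ near $t\in[0,\frac1{32}]\cup[\frac{31}{32},1]$. We also take cut-offs $\theta_R(x_1)\in C^\infty$ with $\theta_R=1$ on $[1,R-1]$ and $\theta_R=0$ outside $[\frac12,R]$, and $\mu(x_1/R+\varphi(t))$ with $\mu(s)=1$ for $s\ge 2$ and $\mu(s)=0$ for $s\le1$. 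Set
\[
h(x,t)=\theta_R(x_1)\,\mu\Big(\frac{x_1}{R}+\varphi(t)\Big)\tilde v(x,t).
\]
Then $h$ has compact support in $(x_1,t)$, it satisfies \eqref{compact-support}, and it lies in $L^2([0,1],H^1)$ because $\tilde v$ does. The magnetic potential in the estimate is $\tilde A_\delta$. Its first component vanishes after the Cronstr\"om gauge (using $\mathbf e_1^\top B=0$), so $\partial_tA_1=0$ and the constant $C_1$ is harmless.

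For the lower bound, on the region $x_1\in[2,\frac R2]$, $t\in[\frac38,\frac58]$ we have $\theta_R=\mu=1$ and $\frac{x_1}{R}+\varphi\ge3$, so the weight is at least $e^{9\sigma}$. Hence the left side of \eqref{carleman estimate pw} is bounded below by $\frac{\sigma^{3/2}}{R^2}e^{9\sigma}\Psi(R)^{1/2}\ge c^{3/2}R\,e^{9cR^2}(b/2)^{1/2}$, using the bound $\Psi(R)\ge b/2$ from Step 5.

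For the upper bound, compute
\[
(i\partial_t+\Delta_{\tilde A_\delta})h=\theta_R\mu\big(-\tfrac{(\alpha-\beta)\tilde A_\delta\cdot x}{\alpha(1-t)+\beta t}-\tilde V_\delta\big)\tilde v+\big(\text{terms with derivatives of }\theta_R,\mu\big).
\]
The derivative terms are supported where $\nabla\theta_R\neq0$ or $\mu'\neq0$. In both regions the weight is controlled. Where $\mu'\ne0$ we have $|\frac{x_1}R+\varphi|\le2$, so $e^{\psi}\le e^{4\sigma}$. Where $\theta_R'\neq0$, either $x_1\in[\frac12,1]$ or $x_1\in[R-1,R]$, and there $\frac{x_1}R+\varphi\le 4$, so $e^\psi\le e^{16\sigma}$. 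All these terms involve only $\tilde v$ and $\tilde D_1\tilde v$, with coefficients bounded by $R^{-1}$, on $x_1\in(\frac12,R)$ and $t\in(\frac1{32},\frac{31}{32})$. Their contribution is therefore at most $e^{16\sigma}\Theta(R)^{1/2}$, up to constants.

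The potential term is absorbed into the left side: $\tilde A_\delta\cdot x=0$ in the Cronstr\"om gauge, and $\tilde V_\delta$ is bounded by $\varepsilon_0$ in $L^1_tL^\infty_x$ and, after the time cut, is small relative to $\sigma^{3/2}/R^2$. I expect this absorption to be the main obstacle, since $\|\tilde V_\delta\|_\infty\lesssim\lambda$ while $\sigma^{3/2}/R^2\sim c^{3/2}R\sim\lambda^{1/2}$. I would first handle it by restricting to the region where $t$ is near $\frac12$, where the Appell weight gives $\tilde V_\delta\sim\frac{\beta}{\alpha}\cdot\frac{\alpha}{\beta}$-scale bounds. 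Failing that, I would use the $L^1_t$ smallness through the time-dependent Carleman variant, exactly as in \cite{KPV-14}.

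Squaring and comparing the two sides gives
\[
c^3R^2e^{18cR^2}\,\tfrac b2\lesssim e^{32cR^2}\Theta(R)+\text{(absorbed)},
\]
and hence $\Theta(R)\ge \frac b8c^3R^2e^{-8cR^2}$ after adjusting the constants in $\varphi$, i.e. choosing the plateau value so that the exponents differ by $8cR^2$.
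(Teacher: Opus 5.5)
There is a genuine gap. Your argument never isolates the right edge $x_1\in(R-1,R)$, and that localization is the whole content of the proposition. In Step 6, $\Theta(R)$ is bounded above using $e^{-2\gamma\kappa(x_1)}\lesssim e^{-\gamma R}$, which is only available near $x_1\approx R$. Over the full strip $\frac12<x_1<R$ the claimed bound is vacuous, since $\Theta(R)\ge\Psi(R)\ge\frac b2$ there. So every other cut-off error term must either vanish, or carry a weight smaller than the weight on the region of $\Psi(R)$, $\{2\le x_1\le\frac R2,\ \frac38\le t\le\frac58\}$, by a growing factor; only then can it be absorbed using a priori bounds on $\tilde v$ alone.

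Your construction fails this at the left edge. With $x_{0,1}=0$ and $\varphi\equiv3$ on $[\frac38,\frac58]$, for $x_1\in[\frac12,1]$ one has $\frac{x_1}{R}+\varphi\ge3$. So $\mu=1$ there, and $2\theta_R'\tilde D_1\tilde v+\theta_R''\tilde v$ survives with weight $\ge e^{9\sigma}$, at least the weight your lower bound uses. The $\mu'$-terms sit on $\{1\le\frac{x_1}{R}+\varphi\le2\}$, which meets every $x_1\in(\frac12,R)$. Bounding all of this by $e^{16\sigma}\Theta(R)^{1/2}$ forces $\Theta$ to be the full-strip integral, where $\tilde v$ has no decay. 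The resulting inequality cannot be fed into Step 6.

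The paper's cut-offs are built to avoid exactly this. It takes $x_{0,1}=\frac R2$ and $0\le\varphi\le\frac32-\frac1R$, with equality on $[\frac38,\frac58]$. Then $\frac{x_1-R/2}{R}+\varphi(t)\le\frac{x_1}{R}+1-\frac1R$, which is $\le1$ for $x_1\le1$ and $\le2-\frac1R$ for $x_1\le R$, while on the $\Psi$-region it is $\ge1+\frac1R$.
\begin{itemize}
\item With $\zeta$ switching on over $[1,1+\frac1{2R}]$, the left edge of $\theta_R$ is killed outright.
\item The $\zeta'$-terms carry weight at most $e^{2\sigma(1+\frac1{2R})^2}$, smaller than the lower-bound weight $e^{2\sigma(1+\frac1R)^2}$ by the factor $e^{-2cR-3c/2}$. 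This is what allows the $cR^2e^{2\sigma(1+\frac1{2R})^2}$ term to be absorbed.
\item Only the right edge then remains, and $2\sigma[(2-\frac1R)^2-(1+\frac1R)^2]=6cR^2-12cR\le8cR^2$ gives the stated exponent.
\end{itemize}

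Your own bookkeeping gives $e^{-14cR^2}$ instead, and moving the plateau does not fix it. Your $\mu$ is only known to equal $1$ on $[2,\infty)$, so you need $p\ge2-\frac2R$ to have $h=\tilde v$ on the $\Psi$-region. Then $2\sigma[(p+1)^2-p^2]\ge10cR^2-8cR$.

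Finally, the potential term you flagged as the main obstacle is harmless. On $\operatorname{supp}h\subset\{\frac1{32}<t<\frac{31}{32}\}$ one has $\alpha(1-t)+\beta t\ge\beta/32$, so $|\tilde V_\delta|\le1024\frac{\alpha}{\beta}\|V_\delta\|_{L^\infty}\to0$ as $\lambda\to\infty$. The size $\frac\beta\alpha\sim\lambda$ occurs only near $t=0$. The term is therefore absorbed directly against $\sigma^{3/2}/R^2=c^{3/2}R$, and no $L^1_t$ variant of the Carleman estimate is needed.
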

\begin{proof}
	Let $x_{0,1}=\frac{R}{2}$ and $\varphi(t):[0,1]\to\mathbb{R}$ be a smooth cutoff function
	\begin{equation}
		\varphi(t)=\begin{cases}
			\frac{3}{2}-\frac{1}{R},\,\,& t\in[\frac{3}{8},\frac{5}{8}],\\ 0, &t\in[0,\frac{1}{4}]\cup[\frac{3}{4},1],
		\end{cases}
	\end{equation}
	with $0\leq\varphi(t)\leq\frac{3}{2}-\frac{1}{R}$ and $\varphi^{\prime}$, $\varphi^{\prime\prime}$ uniformly bounded  for large $R$. Next, we let $\sigma=cR^2$ and $\theta_{R}\in C^{\infty}(\mathbb{R})$ be such that  $0\leq\theta(x_{1})\leq1$ and 
	\begin{equation}
		\theta_{R}(x_{1})=\begin{cases}
			1,&x_{1}\in(1,R-1),\\ 0,&x_{1}\in(-\infty,\frac{1}{2})\cup(R,+\infty).
		\end{cases}
	\end{equation}
	Also, we denote by $\zeta\in C^{\infty}(\mathbb{R})$ with $0\leq\zeta(x_{1})\leq1$ and
	\begin{equation}
		\zeta(x_{1})=\begin{cases}
			0,&x_{1}<1,\\1,&x_{1}>1+\frac{1}{2R}.
		\end{cases}
	\end{equation}
	With these in hand, we define 
	\begin{equation}
		g(x,t)=\theta_{R}(x_{1})\zeta\Big(\frac{x_{1}-\frac{R}{2}}{R}+\varphi(t)\Big)\tilde{v}(x,t).
	\end{equation}
	
	By the construction of cutoff function $\theta_{R}(x_{1})$ and $\varphi(t)$, $g$ is supported in
	\begin{equation}
		\Big\{(x,t):\frac{1}{2}<x_{1}<R,\frac{1}{32}<t<\frac{31}{32},\Big|\frac{x_{1}-\frac{R}{2}}{R}+\varphi(t)\Big|\geq1\Big\}.
	\end{equation}
	Then it follows from Corollary \ref{Carleman coro}, 
	\begin{align}\label{Carleman-new}
		\frac{\sigma^{\frac{3}{2}}}{R^{2}}\Big\|e^{\sigma|\frac{x_{1}-x_{0,1}}{R}+\varphi(t)|^{2}}g\Big\|_{L^{2}([0,1]\times\mathbb{R}^{d})}\leq C\|e^{\sigma|\frac{x_{1}-x_{0,1}}{R}+\varphi(t)|^{2}}(i\partial_{t}+\Delta_{\tilde A_{\delta}})g\|_{L^{2}([0,1]\times\mathbb{R}^{d})}.
	\end{align}
	If $\frac{3}{2}\leq x_{1}<R-1$ and $\frac{3}{8}\leq t\leq\frac{5}{8}$, then $g(x,t)=\tilde{v}(x,t)$, since $\theta_{R}(x_{1})=1$ and 
	$
	\frac{x_{1}-\frac{R}{2}}{R}+\varphi(t)\geq1+\frac{1}{2R}.
	$	
	For $x_{1}>2$, one observes that $\frac{x_{1}}{R}+1-\frac{1}{R}\geq1+\frac{1}{R}$. 
    Consequently, the left-hand side of  \eqref{Carleman-new} admits the lower bound
	\begin{equation}\label{Z0}
		\frac{\sigma^{3}}{R^{4}}e^{2\sigma(1+\frac{1}{R})^{2}}\int_{\mathbb{R}^{d-1}}\int_{2<x_{1}<R-1}\int_{\frac{3}{8}}^{\frac{5}{8}}|\tilde{v}|^{2}\,\dd t\dd x_{1}\dd x^\prime\geq\frac{b}{2}c^{3}R^{2}e^{2\sigma(1+\frac{1}{R})^{2}}
	\end{equation}
	for $R$ large with $\sigma=cR^{2}$ where $c>C_1$. On the other hand, to estimate the upper bound for right hand side of \eqref{Carleman-new}, we compute $i\partial_{t}g+\Delta_{\tilde A_\delta}g$,
	\begin{align}
		(i\partial_{t}+\Delta_{\tilde{A}_{\delta}})g=&\theta_{R}(x_{1})\zeta\Big(\frac{x_{1}-\frac{R}{2}}{R}+\varphi(t)\Big)\tilde{V}_{\delta}\tilde{v}\\&+\zeta\Big(\frac{x_{1}-\frac{R}{2}}{R}+\varphi(t)\Big)\Big(2\theta_{R}^{\prime}(x_{1})\tilde{D}_{1}\tilde{v}+\tilde{v}\theta_{R}^{\prime\prime}(x_{1})\Big)\\&+i\zeta^{\prime}\varphi^{\prime}\tilde{v}+\zeta^{\prime\prime}\frac{1}{R^{2}}\theta_{R}\tilde{v}+\frac{2}{R}\zeta^{\prime}\theta_{R}\tilde{D}_{1}\tilde{v}\\=:&\mathcal{Z}_{1}+\mathcal{Z}_{2}+\mathcal{Z}_{3}.\label{computation of g}
	\end{align}
	As a consequence of \eqref{integral of v} and \eqref{computation of g}, we have $(i\partial_t+\Delta_{\tilde{A}_{\delta}})g\in L^2(\R^{d+1})$. 
	
	Then, we deal with $\mathcal{Z}_{1}=\theta_{R}(x_{1})\zeta\Big(\frac{x_{1}-\frac{R}{2}}{R}+\varphi(t)\Big)\tilde{V}_{\delta}\tilde{v}$.
	Since $\operatorname{supp}\mathcal Z_{1}\subset\{(x,t):\frac{1}{2}<x_{1}<R, \frac{1}{32}<t<\frac{31}{32}\}$, we can get 
	$	\|\tilde{V}_{\delta}\|_{L^{\infty}}\leq\frac{1024\alpha}{\beta}M_{0}
	$ for $\lambda\gg1$.	Thus, 
	for large $R$, the contribution of  $\mathcal{Z}_{1}$ can be absorbed, yielding
	\begin{equation}\label{Zleft}
		\frac{b}{4}c^{3}R^{2}e^{2\sigma(1+\frac{1}{R})^{2}}\leq c\iint_{\mathbb{R}^{d}\times[0,1]}|\mathcal{Z}_{2}|^{2}e^{2\sigma|\frac{x_{1}-\frac{R}{2}}{R}+\varphi(t)|^{2}}\,\dd x\dd t+c\iint_{\mathbb{R}^{d}\times[0,1]}|\mathcal{Z}_{3}|^{2}e^{2\sigma|\frac{x_{1}-\frac{R}{2}}{R}+\varphi(t)|^{2}}\,\dd x\dd t.
	\end{equation}
	
	Next, we estimate $\mathcal{Z}_{2}$. Notice that  $\mathcal{Z}_{2}$ contains the derivative of $\theta_{R}$, and $\operatorname{supp}\nabla\theta_{R}\subset\big\{x_{1}:x_1\in(\frac{1}{2},1)\cup(R-1,R)\big\}$.
	When $x_{1}\in(\frac{1}{2},1)$, the weight function has the upper bound
	$
	\frac{x_{1}-\frac{R}{2}}{R}+\varphi(t)\leq1,
	$
	and we can deduce that 
	$
	\zeta\Big(\frac{x_{1}-\frac{R}{2}}{R}+\varphi(t)\Big)=0$ for $x_{1}\in(\frac{1}{2},1)$.
	When $x_{1}\in(R-1,R)$, we have 
	$
	\frac{x_{1}-\frac{R}{2}}{R}+\varphi(t)%%%\leq1-\frac{1}{2}+\frac{3}{2}-\frac{1}{R}
	\leq2-\frac{1}{R},
	$
	which implies that
	\begin{align}\label{Z2}
		\int_{\mathbb{R}^{d}\times[0,1]}|\mathcal{Z}_{2}|^{2}e^{2\sigma|\frac{x_{1}-\frac{R}{2}}{R}+\varphi(t)|^{2}}\,\dd x\dd t\leq e^{2\sigma(2-\frac{1}{R})^{2}}\Theta(R).
	\end{align}
	
	It remains to estimate the term $\mathcal{Z}_{3}$, which contains the derivative of $\zeta$. By using the support of $\mathcal{Z}_3$, 
	\begin{equation}
		\operatorname{supp}\mathcal{Z}_{3}\subset \Big\{(x,t):\frac{1}{2}<x_{1}<R,\frac{1}{32}<t<\frac{31}{32},1\leq\frac{x_{1}-\frac{R}{2}}{R}+\varphi(t)\leq1+\frac{1}{2R}\Big\},
	\end{equation}
	we have
	\begin{align}\label{Z3}
		&\int_{\mathbb{R}^{d}\times[0,1]}|\mathcal{Z}_{3}|^{2}e^{2\sigma|\frac{x_{1}-\frac{R}{2}}{R}+\varphi(t)|^{2}}\,\dd x\dd t
		% \\\leq& R^{2}e^{2\sigma(1+\frac{1}{2R})^{2}}\int_{\frac{1}{32}}^{\frac{31}{32}}\int_{\mathbb{R}^{d-1}}\int_{\frac{1}{2}<x_{1}<R}|\tilde{v}|^{2}\,\dd x_{1}\dd x^\prime\dd t+e^{2\sigma(1+\frac{1}{2R})^{2}}\int_{\frac{1}{32}}^{\frac{31}{32}}\int_{\mathbb{R}^{d-1}}\int_{\frac{1}{2}<x_{1}<R}|\tilde{D}_{1}\tilde{v}|^{2}\,\dd x_{1}\dd x^\prime\dd t\\
		%\leq&R^{2}e^{2\sigma(1+\frac{1}{2R})^{2}}\int_{\frac{1}{32}}^{\frac{31}{32}}\int_{\mathbb{R}^{d-1}}\int_{\frac{1}{2}<x_{1}<R}|\tilde{v}|^{2}\,\dd x_{1}\dd x^\prime\dd t+e^{2\sigma(2-\frac{1}{R})^{2}}\int_{\frac{1}{32}}^{\frac{31}{32}}\int_{\mathbb{R}^{d-1}}\int_{\frac{1}{2}<x_{1}<R}|\tilde{D}_{1}\tilde{v}|^{2}\,\dd x_{1}\dd x^\prime\dd t\\
		\leq cR^2e^{2\sigma(1+\frac{1}{2R})^{2}}+e^{2\sigma(2-\frac{1}{R})^{2}}\Theta(R).
	\end{align}
	Combining all the above estimates \eqref{Z0}, \eqref{Z2}, \eqref{Z3} and \eqref{Zleft}, we get
	\begin{align}
		\frac{b}{4}c^{3}R^{2}e^{2\sigma(1+\frac{1}{R})^{2}}\leq ce^{2\sigma(2-\frac{1}{R})^{2}}\Theta(R)+cR^{2}e^{2\sigma(1+\frac{1}{2R})^{2}}.
	\end{align}
	%For large $R$, $cR^{2}e^{2\sigma(1+\frac{1}{2R})^{2}}$ can be absorbed by $\frac{b}{4}c^{3}R^{2}e^{2\sigma(1+\frac{1}{R})^{2}}$. 
	Then we can get
	\begin{equation}
		\frac{b}{8}c^{3}R^{2}e^{2\sigma(1+\frac{1}{R})^{2}}\leq c\Theta(R)e^{2\sigma(2-\frac{1}{R})^{2}}.
	\end{equation}
	From the fact 
	$
	2\sigma(1+\frac1 R)^2-2\sigma(2-\frac1R)^2\geq  -8cR^2,
	%2\sigma(1+\frac{1}{R})^{2}-2\sigma(2-\frac{1}{R})^{2}=-6cR^{2}+12cR\geq-8cR^2,
	$
	we get
	$
	\frac{b}{8}c^{3}R^{2}e^{-8cR^{2}}\leq c\Theta(R).
	$
\end{proof}
\textbf{Step 6: Vanishing on a strip.}
From \eqref{upper bound for derivative}, and $\kappa(x_{1})\sim x_{1}$ for $x_{1}\gg1$, we get 
\begin{align}
	\Theta(R)=&\int_{\mathbb{R}^{d-1}}\int_{R-1<x_{1}<R}\int_{\frac{1}{32}}^{\frac{31}{32}}|\tilde{v}|^{2}+|\tilde{D}_{1}\tilde{v}|^{2}\,\dd t\dd x_{1}\dd x^\prime\\ 
	=&\int_{\mathbb{R}^{d-1}}\int_{R-1<x_{1}<R}\int_{\frac{1}{32}}^{\frac{31}{32}}e^{2\gamma\kappa}e^{-2\gamma\kappa}(|\tilde{v}|^{2}+|\tilde{D}_{1}\tilde{v}|^{2})\,\dd t\dd x_{1}\dd x^\prime\\ \leq&ce^{-\gamma R}R^{3}\lambda\,e^{2\lambda m\delta^{-\frac12}}.
\end{align}

Then for $R$ large, 
\begin{equation}\label{bbb}
	b\leq ce^{8cR^{2}-\gamma R+3\lambda m\delta^{-\frac12}}.
\end{equation}
Since the parameter $\lambda$ is free, we take \begin{equation}
	R=\frac{2Mm\lambda^{\frac{1}{2}}}{(\delta^{\frac{1}{2}}a_{1})^{\frac{1}{2}}c},\,\,\gamma=(\lambda\delta^{\frac{1}{2}}a_{1})^{\frac{1}{2}},\,\,M\geq\frac{4c}{\delta^{\frac{1}{2}}},
\end{equation}
with $M$ and $m$ determined later,
then 
\begin{align}
	8cR^{2}-\gamma R+\frac{3\lambda m}{\delta^{1/2}}
	=\lambda\Big(32c\frac{M^{2}m^{2}}{\delta^{\frac{1}{2}}a_{1}c^{2}}-\delta^{\frac{1}{4}}a_{1}^{\frac{1}{2}}\frac{2Mm}{\delta^{\frac{1}{4}}a_{1}c}+\frac{3m}{\delta^{\frac{1}{2}}}\Big).\label{exponential part}
\end{align}
We expect that \eqref{exponential part} is negative.
Let
\begin{equation}
	32c\frac{M^{2}m^{2}}{\delta^{\frac{1}{2}}a_{1}c^{2}}+\frac{3m}{\delta^{\frac{1}{2}}}<\delta^{\frac{1}{4}}a_{1}^{\frac{1}{2}}\frac{2Mm}{\delta^{\frac{1}{4}}a_{1}^{\frac{1}{2}}c}=\frac{2Mm}{c},
\end{equation}
after dividing by $Mm$, we get
\begin{equation}
	32c\frac{Mm}{\delta^{\frac{1}{2}}a_{1}c^{2}}+\frac{3}{M\delta^{\frac{1}{2}}}<\frac{2}{c}.
\end{equation}Next, we will determine $M,m$ by using the above inequality. The strategy is to select large $M$ such that the second term $\frac{3}{M\delta^\frac12}<\frac1c$. Then fix this $M$, choosing $m$ sufficiently small such that 
$
\frac{32cMm}{\delta^\frac12a_1c^2}<c^{-1}.
$
Therefore, together with \eqref{bbb}, we finally obtain that  $b\leq ce^{-c(m,M,\delta,a_{1})\lambda}$ holds for arbitrary $\lambda$. This implies  that $b=0$. In conclusion, we have $u(x,1)\equiv0$ on a strip $\{x\in\R^d:x_2\in[\frac{m}{2},m]\}$. By the reduction on \textbf{Step 2}
, we finish the proof of this theorem.\end{proof}

\begin{proof}[Proof of Theorem \ref{thm-non}]
Here, we give the sketch of proof.  Let $w(t)=u_1(t)-u_2(t)$, then $w(t)$ satisfies the equation
\begin{equation}
	i\partial_{t}w+\Delta_{A}w+Vw+V_1(x,t)w=0,
\end{equation}
with
\begin{equation}
	V_1(x,t)=\frac{F(u_{1},\bar{u}_{1})-F(u_{2},\bar{u}_{2})}{u_{1}-u_{2}}.
\end{equation}
Following the proof of \cite[Corollay 2.4]{KPV-CPAM},  we obtain
\begin{equation}
	\int_{0}^{1} \|V_1(t)\|_{L^{\infty}(\mathbb{R}^{d}\backslash B_{\rho})}\,\dd t\leq C\sum_{j,l=1}^{2}\big(\|u_{l}\|^{p_{j}-1}_{L_{t}^{p_{j}-1}L^{2}(\mathbb{R}^{d}\setminus B_{\rho})}+\|\partial^{\gamma}u_{l}\|^{p_{j}-1}_{L_{t}^{p_{j}-1}L^{2}(\mathbb{R}^{d}\backslash B_{\rho})}\big)\to0
\end{equation}
as $\rho\to\infty$.
Then, the assumptions in Theorem \ref{thm-P} are satisfied and so it  yields the desired uniqueness result. 
\end{proof}
\textbf{Acknowledgments}. We  thank the anonymous referee and  editor  for their
invaluable comments.  Y. Wang is supported by a Juan de la Cierva fellowship funded by
MICIU/AEI/10.13039/501100011033, under Grant JDC2024 053285-I. J. Zheng was supported by  National key R\&D program of China: 2021YFA1002500 and  NSFC Grant 12271051.

\bibliographystyle{plain}

\end{document}